\documentclass[letterpaper,10pt]{amsart}

\usepackage{epsfig}
\usepackage{amssymb}
\usepackage{amsfonts}
\usepackage{amsmath}
\usepackage{graphicx}
\usepackage{mathrsfs}
\usepackage{color}
\usepackage{stmaryrd}
\usepackage{amsthm}
\usepackage[all]{xy}
\usepackage[top=1.25in, bottom=1.25in, left=1.25in, right=1.25in]{geometry}

\newtheorem{thm}{Theorem}[section]
\newtheorem{thmx}{Theorem}

\newtheorem{prop}{Proposition}

\newtheorem{conjecture}{Conjecture}

\newtheorem{rem}{Remark}


\begin{document}
\title{Gelfand problem and Hemisphere rigidity}

\author{Mijia Lai}
\address{School of Mathematical Sciences,
Shanghai Jiao Tong University}
\email{laimijia@sjtu.edu.cn}

\author{Wei Wei}
\address{Department of Mathematics, Nanjing University
}
\email{wei\_wei@nju.edu.cn}

\thanks{M. Lai's research is supported in part by National Natural Science Foundation of China No. 12031012, No. 12171313 and the Institute of Modern Analysis-A Frontier Research Center of Shanghai. W. Wei's research is supported in part by BoXin programme BX20190082.}
\begin{abstract}
We give an interpretation of the hemisphere rigidity theorem of Hang-Wang in the framework of Gelfand problem. More precisely, Hang-Wang showed that for a metric $g$ conformal to the standard metric $g_0$ on $S^{n}_{+}$ with $R\geq n(n-1)$ and whose boundary coincides with $g_0|_{\partial S^{n}_{+}}$, then $g=g_0$. This is related to the classical Gelfand problem, which investigates $-\Delta u=\lambda g(u)$ for certain nonlinearity $g$ in a bounded region $\Omega \subset \mathbb{R}^n$ subject to the Dirichlet boundary condition. It is well-known that there exists an extremal $\lambda^{*}$, such that for $\lambda>\lambda^{*}$, the above equation does not admit any solution. Interestingly, Hang-Wang's hemisphere rigidity theorem yields a precise value for $\lambda^{*}$ for $g(u)=e^{2u}$ when $n=2$ and $g(u)=(1+u)^{\frac{n+2}{n-2}}$ for $n\geq 3$. We attempt to generalize the hemisphere rigidity theorem under $Q$ curvature lower bound and fit this into the interpretation of fourth order Gelfand problem for bi-Laplacian with conformal nonlinearity.
\end{abstract}

\maketitle

\section{Introduction}
There have been extensive studies of positive solutions to the semilinear equation 
\begin{align} \label{Gelfand2nd} \tag{$G_{\lambda}$}
\left\{
  \begin{array}{ll}
    -\Delta u= \lambda g(u) & \text{in $\Omega$}, \\
    u=0, & \text{on $\partial \Omega$}.
  \end{array}
\right.
\end{align}
Here $\Omega$ is a smooth bounded domain in $\mathbb{R}^n$, $\lambda$ is a nonnegative parameter and $g$ is a positive increasing and convex smooth function. 
The study originates from the classical Gelfand problem where $g(u)=e^u$.  

A fundamental result for (\ref{Gelfand2nd}) is that there exists $\lambda^{*}>0$, such that (c.f.~\cite{BCMR})
\begin{itemize}
  \item for $\lambda\in(0, \lambda^{*})$, there exists a {\bf classical minimal solution} $u_{\lambda}$, and by minimal we mean $u_{\lambda}\leq v$ for any other solution $v$;
  \item for $\lambda>\lambda^{*}$, there exists no solution;
  \item the map $\lambda\to u_{\lambda}(x)$ is increasing in $(0, \lambda^{*})$ for each $x\in \Omega$, thus one may define $u_{\lambda^{*}}:=\lim_{\lambda\to \lambda^{*}} u_{\lambda}$ which is shown to be the unique weak solution of ($G_{\lambda^{*}}$).
\end{itemize}
A question that has attracted a lot of research interest is the regularity of $u_{\lambda^{*}}$ and whether it is a classical solution of ($G_{\lambda^{*}}$), see e.g. ~\cite{CC}.

Now we specialize that $\Omega=B_1$, and consider the scalar curvature equation, i.e, $g(u)=e^{2u}$ if $n=2$, and $g(u)=(1+u)^{\frac{n+2}{n-2}}$, if $n\geq 3$. Let $g_0$ be the Euclidean metric on $B_1$, if $u$ is a solution to (\ref{Gelfand2nd}), then $g=e^{2u}g_0$ has Gaussian curvature $\lambda$ ($n=2$) and $g=(1+u)^{\frac{4}{n-2}}g_0$ has scalar curvature $\frac{4(n-1)}{n-2} \lambda$ ($n\geq 3$) respectively. Moreover the restriction of $g$ on the boundary coincides with the standard $S^{n-1}$. Geometrically, large spherical caps and small spherical caps account for the phenomenon of multiple solutions (when $\lambda<\lambda^{*}$). Hence it is 
alluring to think the {\bf extremal} $\lambda^{*}$ occurs whenever the solution corresponds exactly to the hemisphere. This is indeed the case in the sense that  Hang and Wang~\cite{HW} proved the following hemisphere rigidity theorem.

\begin{thm}[Hang-Wang] Let $g$ be a metric on $S^n_+$ conformal to the standard spherical metric $g_0$. Suppose
\begin{itemize}
  \item $R_g\geq n(n-1)$,
  \item the induced metric on $\partial S^{n}_{+}$ coincides with $g_0$,
\end{itemize}
then $(S^{n}_{+},g)$ is isometric to the standard hemisphere.
\end{thm}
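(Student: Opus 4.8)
The plan is to work directly on the hemisphere: rewrite the curvature hypothesis as a differential inequality for the conformal factor and exploit that the standard hemisphere sits exactly at the threshold where a maximum-principle obstruction degenerates. First write $g=\phi^{4/(n-2)}g_0$ with $\phi\in C^\infty(\overline{S^n_+})$, $\phi>0$, when $n\ge 3$, and $g=e^{2u}g_0$ with $u\in C^\infty(\overline{S^n_+})$ when $n=2$ (possible since $g$ and $g_0$ are smooth nondegenerate metrics, conformal by hypothesis). The second hypothesis says exactly that $\phi\equiv 1$ (resp. $u\equiv 0$) on $\partial S^n_+$. The conformal transformation law for scalar curvature turns $R_g\ge n(n-1)$ into
\[
-\frac{4(n-1)}{n-2}\,\Delta_{g_0}\phi\ \ge\ n(n-1)\bigl(\phi^{\frac{n+2}{n-2}}-\phi\bigr)\quad\text{in }S^n_+,
\]
respectively $-\Delta_{g_0}u\ge e^{2u}-1$ when $n=2$. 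Put $\eta:=\phi-1$ (resp. $\eta:=u$); it vanishes on $\partial S^n_+$, and the right-hand side is $n(n-1)F(\eta)$ with $F(t)=(1+t)^{\frac{n+2}{n-2}}-(1+t)$ (resp. $F(t)=e^{2t}-1$). The key point is that $F$ is strictly convex, $F(0)=0$, and — the crucial numerology — $F'(0)=\frac{4}{n-2}$ (resp. $F'(0)=2$), precisely the value for which the tangent-line estimate $F(\eta)\ge F'(0)\,\eta$ upgrades the inequality to
\[
-\Delta_{g_0}\eta-\lambda_1\,\eta\ \ge\ 0\quad\text{in }S^n_+,\qquad \eta|_{\partial S^n_+}=0,
\]
where $\lambda_1=n$ (resp. $\lambda_1=2$) is the first Dirichlet eigenvalue of $-\Delta_{g_0}$ on $S^n_+$, realized by the positive eigenfunction $\psi=x_{n+1}$, the restriction to $S^n_+$ of the height function (with $S^n\subset\mathbb R^{n+1}$).

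The second step is to test this against $\psi$. Since $\eta$ and $\psi$ both vanish on $\partial S^n_+$, Green's identity gives
\[
\int_{S^n_+}\psi\,\bigl(-\Delta_{g_0}\eta-\lambda_1\eta\bigr)\,dV_{g_0}=\int_{S^n_+}\eta\,\bigl(-\Delta_{g_0}\psi-\lambda_1\psi\bigr)\,dV_{g_0}=0.
\]
Because $\psi>0$ in the interior and the left integrand is continuous and nonnegative, it vanishes identically, so every inequality used above is in fact an equality. In particular $F(\eta)=F'(0)\,\eta$ pointwise, and since for a strictly convex $F$ the tangent at $0$ meets the graph only at $0$, this forces $\eta\equiv 0$. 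Hence $\phi\equiv 1$ (resp. $u\equiv 0$), i.e. $g=g_0$, and $(S^n_+,g)$ is the standard hemisphere.

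The main obstacle is really the observation underlying the first step: $F$ is increasing, so a naive comparison of $\phi$ with the model $\phi\equiv 1$ via the maximum principle fails in both directions, and the argument closes only because the hemisphere is the exact borderline case, where $F'(0)$ coincides with the first Dirichlet eigenvalue and the linearized operator $-\Delta_{g_0}-\lambda_1$ has a one-dimensional kernel spanned by a \emph{positive} function. The remaining points are routine: the identity $\lambda_1^{\mathrm{Dir}}(-\Delta_{g_0},S^n_+)=n$ with eigenfunction $x_{n+1}$ follows by odd reflection across the equator into the degree-one spherical harmonics on $S^n$, and smoothness of $\phi$ up to the boundary is automatic on the compact manifold-with-boundary $\overline{S^n_+}$. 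Finally, transplanting everything to the flat unit ball by stereographic projection recasts the same argument as a statement about positive supersolutions of $-\Delta w\ge \frac{n(n-2)}{4}w^{\frac{n+2}{n-2}}$ in $B_1$ with $w\equiv 1$ on $\partial B_1$, which is the form directly comparable with the Gelfand problem.
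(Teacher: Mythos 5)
Your proof is correct, and it is worth noting that the paper itself does not prove this statement at all: it is quoted from Hang--Wang \cite{HW}, and the paper's own proofs concern the fourth-order analogues (Theorems A and B). Your argument checks out in detail: writing $g=\phi^{4/(n-2)}g_0$ (resp. $g=e^{2u}g_0$ for $n=2$), the hypothesis $R_g\ge n(n-1)$ becomes $-\Delta_{g_0}\phi\ge \frac{n(n-2)}{4}\bigl(\phi^{\frac{n+2}{n-2}}-\phi\bigr)$, the function $F(t)=(1+t)^{\frac{n+2}{n-2}}-(1+t)$ is strictly convex on $\{1+t>0\}$ (where $\eta=\phi-1$ indeed lives, since $\phi>0$), and the numerical coincidence $F'(0)=\frac{4}{n-2}$ turns the tangent-line bound into $-\Delta_{g_0}\eta-n\eta\ge 0$ with $n=\lambda_1^{\mathrm{Dir}}(S^n_+)$, realized by the positive eigenfunction $x_{n+1}$; pairing with $x_{n+1}$ via Green's identity (both functions vanish on the equator, so no boundary terms) forces equality everywhere, and strict convexity then pins $\eta\equiv 0$, with the $n=2$ case ($F(t)=e^{2t}-1$, $\lambda_1=2$) identical. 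This is a genuinely different route from the techniques the paper uses for its $Q$-curvature rigidity results, where the authors first reduce to radial data by spherical averaging and Jensen's inequality, then run a monotone iteration against the minimal solution using the comparison theorem for $\Delta^2$ on the ball, and finish with the maximum principle and Hopf lemma; that heavier machinery is needed precisely because your one-shot eigenfunction pairing does not survive the passage to fourth order (integration by parts against an eigenfunction produces boundary terms in $\Delta u$ and $\partial_\nu\Delta u$ that the conformal boundary data do not control, and positivity-preserving properties of $\Delta^2$ are delicate), which is also why Theorems A and B carry extra boundary hypotheses on $R_g$ or $T$. What your approach buys is a short, self-contained proof of the second-order statement that makes transparent why the hemisphere is the extremal configuration: the threshold is exactly the coincidence of $F'(0)$ with the first Dirichlet eigenvalue of the hemisphere, which is the geometric counterpart of the extremal $\lambda^{*}$ in the Gelfand picture discussed in the paper.
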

Fitting the result of above theorem into the framework of (\ref{Gelfand2nd}) with $\Omega=B_1$, we can easily deduce that 
\begin{itemize}
    \item if $n=2$, $g(u)=e^{2u}$, then $\lambda^{*}=1$;
    \item if $n\geq 3$, $g(u)=(1+u)^{\frac{n+2}{n-2}}$, then $\lambda^{*}=\frac{(n-2)n}{4}$.
\end{itemize}

Note that Martel~\cite{M} showed that any weak supersolution of ($G_{\lambda^{*}}$) must be the weak solution. This can be viewed as an analytical counterpart of the hemisphere rigidity, as $R\geq n(n-1)$ is equivalent to the conformal factor being a supersolution. 

In this paper, we wish to generalize the hemisphere rigidity theorem to fourth order, namely under the assumption that $Q$ curvature is bounded from below,  and try to fit the result into the framework of Gelfand problem for bi-Laplace operator. 

$Q$ curvature is a fourth order curvature first introduced by Branson~\cite{Br}. There is a conformal covariant operator $P$, the Paneitz operator, associated with the $Q$ curvature. The pair $(P,Q)$ plays an important role in conformal geometry. Even though $Q$ curvature is a higher order curvature, it still reinforces considerable control on the geometry and topology of underlying manifolds, especially in dimension four (c.f. ~\cite{C}). 

Now let's give detailed definitions of $P$ and $Q$. Let $(M, g)$ be a smooth Riemannian manifold with dimension $n \geq 3,$ the $Q$ curvature is given by (following notations in~\cite{HY})
\begin{align} \notag
Q =-\Delta J-2|A|^{2}+\frac{n}{2} J^{2},
\end{align}
where
\[
J=\frac{R}{2(n-1)}, \quad A=\frac{1}{n-2}(Ric-J g).
\]
The Paneitz operator is defined as
\begin{align} \label{P}
P \varphi = \Delta^{2} \varphi+\operatorname{div}\left(4 A\left(\nabla \varphi, e_{i}\right) e_{i}-(n-2) J \nabla \varphi\right)+\frac{n-4}{2} Q \varphi,
\end{align} where $e_1, \cdots, e_n$ is an orthonormal frame with respect to $g$.

In dimension $4,$ the Paneitz operator satisfies
\[
P_{e^{2 w} g} \varphi=e^{-4 w} P_{g} \varphi,
\]
and the $Q$ curvature transforms as
\begin{align}  \label{Q4}
Q_{e^{2 w} g}=e^{-4 w}\left(P_{g} w+Q_{g}\right).
\end{align}
In dimension $n \neq 4,$ the operator satisfies
\[
P_{\rho^{ \frac{4}{n-4}} g} \varphi=\rho^{-\frac{n+4}{n-4}} P_{g}(\rho \varphi),
\]
for any positive smooth function $\rho .$  As a consequence of (\ref{P}) and above transformation law, we have
\begin{align} \label{Q}
Q_{\rho^{\frac{4}{n-4}} g}=\frac{2}{n-4} P_{\rho^{\frac{4}{n-4}} g} 1=\frac{2}{n-4} \rho^{-\frac{n+4}{n-4}} P_{g} \rho.
\end{align}

Our main theorems are 
\begin{thmx}\label{T2}
Let $g$ be a smooth metric on $S^{n}_{+}$, $n\geq3$, which is conformal to the standard round metric $g_0$. Suppose
\begin{enumerate}
  \item $Q_g\geq Q_0$,
  \item the induced metric on $\partial S^{n}_{+}$ coincides with $g_0$,
  \item $\partial S^{n}_{+}$ is totally geodesic,
  \item $R_g\geq n(n-1)$ on $\partial S^{n}_{+}$.
\end{enumerate}
Then $g=g_0$.
\end{thmx}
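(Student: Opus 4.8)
The plan is to pass to the conformal factor, convert the four hypotheses into a single fourth‑order differential inequality together with three pieces of boundary data, and then exploit the factorization of the round‑sphere Paneitz operator into two second‑order operators, one coercive and the other with a zero Dirichlet eigenvalue. I describe the case $n\ge5$ (the cases $n=3,4$ are noted at the end). First, write $g=\rho^{4/(n-4)}g_0$ with $\rho>0$ smooth on $\overline{S^n_+}$, so $\rho\equiv1$ corresponds to $g_0$. By (\ref{Q}), hypothesis (1) is $P_{g_0}\rho\ge c_1c_2\,\rho^{p}$ on $S^n_+$, where $p=\frac{n+4}{n-4}$, $c_1=\frac{n(n-2)}{4}$, $c_2=\frac{(n+2)(n-4)}{4}$, $c_1c_2=\frac{n-4}{2}Q_0$, and on the round sphere $P_{g_0}=L_1\circ L_2$ with $L_i=-\Delta_{g_0}+c_i$. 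Hypothesis (2) gives $\rho=1$ on $\partial S^n_+$; since $\partial S^n_+$ is totally geodesic for $g_0$, hypothesis (3) forces $\partial_\nu\rho=0$ on $\partial S^n_+$; and then a direct computation using $\rho=1$, $\partial_\nu\rho=0$ there gives $R_g=n(n-1)-\frac{4(n-1)}{n-4}\Delta_{g_0}\rho$ along $\partial S^n_+$, so hypothesis (4) becomes exactly $\Delta_{g_0}\rho\le0$ on $\partial S^n_+$.

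Next, I would absorb the nonlinearity without losing rigidity by convexity: since $t\mapsto t^{p}$ is convex on $(0,\infty)$ we have $\rho^{p}-1\ge p(\rho-1)$, and subtracting $P_{g_0}1=c_1c_2$ from hypothesis (1) gives $(P_{g_0}-c_1c_2\,p)(\rho-1)\ge0$ on $S^n_+$. One then checks the algebraic identity
\[
P_{g_0}-c_1c_2\,p \;=\; M_1\circ M_2,\qquad M_1=-\Delta_{g_0}-n,\quad M_2=-\Delta_{g_0}+\tfrac{n^2-4}{2}.
\]
On $S^n_+$ with the Dirichlet condition, $M_2$ has positive first eigenvalue $n+\frac{n^2-4}{2}$, so is invertible, while $M_1$ has simple first eigenvalue $0$ with eigenfunction $\xi$, the restriction of a linear coordinate function; thus $\xi>0$ in $S^n_+$ and $\xi=0$, $\partial_\nu\xi=-1$ on $\partial S^n_+$.

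Then set $\varphi=\rho-1$ and $\chi=M_2\varphi$, so $M_1\chi\ge0$ in $S^n_+$, while $\varphi=\partial_\nu\varphi=0$ on $\partial S^n_+$ and, by hypothesis (4), $\chi=-\Delta_{g_0}\varphi\ge0$ on $\partial S^n_+$. The Green identity
\[
\int_{S^n_+}\xi\,M_1\chi\;dV_{g_0}\;=\;\int_{\partial S^n_+}\big(\chi\,\partial_\nu\xi-\xi\,\partial_\nu\chi\big)\,dA_{g_0}\;=\;-\int_{\partial S^n_+}\chi\;dA_{g_0}
\]
(using $M_1\xi=0$ and $\xi|_{\partial S^n_+}=0$) has nonnegative left-hand side and boundary integrand of sign $-\chi\le0$, so it forces $\chi\equiv0$ on $\partial S^n_+$ and then $M_1\chi\equiv0$ in $S^n_+$; simplicity of the eigenvalue $0$ gives $\chi=c\,\xi$ for a constant $c$. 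Finally, the solution $\varphi_1$ of $M_2\varphi_1=\xi$, $\varphi_1|_{\partial S^n_+}=0$ satisfies $\partial_\nu\varphi_1<0$ on $\partial S^n_+$ by the maximum principle and Hopf's lemma, so the constraint $\partial_\nu\varphi=0$ on $\partial S^n_+$ (with $\varphi=c\varphi_1$ by invertibility of $M_2$) forces $c=0$; hence $M_2\varphi=0$, $\varphi|_{\partial S^n_+}=0$, so $\varphi\equiv0$ and $g=g_0$. For $n=4$ the same argument applies with $g=e^{2w}g_0$, $\rho-1$ replaced by $w$, and convexity used in the form $e^{4w}-1\ge4w$; for $n=3$ the sign of $c_1c_2$ and the direction of every inequality reverse (including in hypothesis (4)), and the argument is unchanged.

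The step I expect to demand the most care is the boundary bookkeeping in the first paragraph — confirming that hypotheses (2) and (3) really do pin $\rho$ and $\partial_\nu\rho$ to their standard values and that hypothesis (4) translates exactly into the sign of $\Delta_{g_0}\rho$ on the equator, since it is precisely this sign (paired against $\partial_\nu\xi=-1$) that powers the Green-identity step --- together with carefully tracking the sign reversals in low dimensions. A different, more geometric route would instead deduce from $Q_g\ge Q_0$ and the pointwise inequality $|A|^2\ge\frac1n(\operatorname{tr}A)^2=\frac1n J^2$ that $\Delta_g R_g\le\frac{n^2-4}{4n(n-1)}\big(R_g^2-n^2(n-1)^2\big)$, propagate $R_g\ge n(n-1)$ from $\partial S^n_+$ into the interior by a maximum principle, and then invoke the Hang--Wang theorem; but that route must separately rule out $R_g$ dropping below $-n(n-1)$ at an interior minimum, which the local maximum principle by itself does not give.
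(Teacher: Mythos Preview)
Your proof is correct and takes a genuinely different route from the paper's.

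The paper transports the problem to the Euclidean ball, takes a spherical average of the conformal factor (using Jensen for the convex nonlinearity), and then runs a monotone iteration based on Boggio's comparison principle for the bi-Laplacian to squeeze the averaged supersolution down to a solution; it identifies this solution as $u_s$ by appealing to Proposition~\ref{P2}, whose proof in turn uses the Berchio--Gazzola--Weth radial-symmetry theorem together with a Pohozaev identity to show there are at most two solutions and that $u_s$ is the minimal one. The scalar-curvature hypothesis is then used exactly as you use it, to control $\Delta u$ on the boundary, and a second-order Hopf argument finishes.

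Your argument instead stays on the hemisphere and linearizes immediately via $t^{p}-1\ge p(t-1)$, reducing to the linear inequality $(P_{g_0}-c_1c_2p)\varphi\ge0$; the key observation is the clean factorization $P_{g_0}-c_1c_2p=(-\Delta_{g_0}-n)(-\Delta_{g_0}+\tfrac{n^2-4}{2})$, which pairs a coercive factor with one whose first Dirichlet eigenvalue on $S^n_+$ is exactly zero. Testing against the first eigenfunction then forces everything to vanish. This is more self-contained: it uses only second-order maximum principles and Hopf, and avoids the bi-Laplacian Green function, the iteration scheme, the Pohozaev identity, and the external radial-symmetry input. What it gives up is the explicit connection to the Gelfand-problem picture (minimal solutions, the branch $\lambda\mapsto u_\lambda$), which is the paper's organizing theme; the paper's longer argument is designed to exhibit $u_s$ as a minimal solution within that framework, not merely to prove rigidity. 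Your caution about the boundary bookkeeping is well placed but your computations there are correct, and the sign reversals for $n=3$ go through as you indicate.
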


On a compact $4$-manifold $M$ with boundary, the Gauss-Bonnet-Chern formula is
\begin{align}\notag
4\pi^2\chi(M) =\int_{M}(\frac{|\mathcal{W}|^2}{8}+ Q )dv + \int_{\partial M} (\mathcal{L}+T) ds,
\end{align}
where $|\mathcal{W}|^2 dv$ and $\mathcal{L}ds$ are pointwise conformal invariants and $T$ is a boundary curvature term first introduced by Chang and Qing in their study of zeta functional determinants on manifolds with boundary~\cite{CQ}. As a consequence
\[
\int_{M} Q dv+ \int_{\partial M} T ds
\]
is conformally invariant. In the special case that $M$ has totally geodesic boundary (c.f.~\cite{C})
\[
T=\frac{1}{12}\frac{\partial}{\partial \nu} R,
\]
where $\nu$ is the unit outward normal of the boundary w.r.t $g$.
Using the $T$ curvature, we have 

\begin{thmx}\label{T1}
Let $g$ be a smooth metric on $S^{4}_{+}$, which is conformal to the standard round metric $g_0$. Suppose
\begin{enumerate}
  \item $Q_g\geq Q_0$,
  \item the induced metric on $\partial S^{n}_{+}$ coincides with $g_0$,
  \item $\partial S^{n}_{+}$ is totally geodesic,
  \item $\int_{\partial S^4_{+}} T d\sigma \geq0$.
\end{enumerate}
Then $g=g_0$.
\end{thmx}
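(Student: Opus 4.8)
The plan is to reduce the $n=4$ statement to the setting of Theorem A, or more precisely to run the same conformal-normalization argument while replacing the pointwise hypothesis $R_g \ge n(n-1)$ on the boundary (hypothesis (4) of Theorem A) by the integral hypothesis $\int_{\partial S^4_+} T\, d\sigma \ge 0$. Write $g = e^{2w} g_0$. Since the boundary is totally geodesic and the induced metric agrees with $g_0|_{\partial S^4_+}$, the conformal factor satisfies $w = 0$ and, because the boundary is umbilic/totally geodesic on both sides, $\partial_\nu w = 0$ along $\partial S^4_+$; these are the natural Neumann-type boundary conditions that make the Paneitz operator on $S^4_+$ behave like the one on the closed sphere in the relevant integrations by parts. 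First I would record the transformation law \eqref{Q4}, $Q_g = e^{-4w}(P_{g_0} w + Q_0)$, so that hypothesis (1), $Q_g \ge Q_0$, becomes the differential inequality $P_{g_0} w \ge Q_0(e^{4w} - 1)$ on $S^4_+$.

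Next I would integrate this inequality against a suitable test function. The key is the conformal invariance of $\int_M Q\, dv + \int_{\partial M} T\, ds$ highlighted in the excerpt: on the standard hemisphere $T_{g_0} \equiv 0$ (the round metric has constant scalar curvature, so $\partial_\nu R_{g_0} = 0$), hence $\int_{S^4_+} Q_g\, dv_g + \int_{\partial S^4_+} T_g\, d\sigma_g = \int_{S^4_+} Q_0\, dv_0$. Combining this with $Q_g \ge Q_0$ and hypothesis (4), $\int_{\partial S^4_+} T_g\, d\sigma_g \ge 0$, forces
\begin{align}\notag
\int_{S^4_+} Q_0\, dv_0 \;=\; \int_{S^4_+} Q_g\, dv_g + \int_{\partial S^4_+} T_g\, d\sigma_g \;\ge\; \int_{S^4_+} Q_0\, dv_g \;+\;0,
\end{align}
i.e. $\int_{S^4_+}(e^{4w}-1)\, dv_0 \le 0$ since $Q_0 > 0$ is constant. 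At the same time, integrating $P_{g_0} w \ge Q_0(e^{4w}-1)$ over $S^4_+$ and using that $\int_{S^4_+} P_{g_0} w\, dv_0 = -\int_{\partial S^4_+} T$-type boundary integral that vanishes under the Neumann conditions $w = \partial_\nu w = 0$ (this is exactly the fact that the total $Q+T$ is conformally invariant, read infinitesimally), I get $\int_{S^4_+}(e^{4w}-1)\,dv_0 \ge 0$ as well, hence $\int_{S^4_+} e^{4w}\,dv_0 = \mathrm{Vol}(S^4_+, g_0)$ with $w=0$ on the boundary.

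From the two-sided volume identity together with the pointwise inequality $Q_g \ge Q_0$ I then want to conclude $w \equiv 0$. The cleanest route is to feed the information back into the PDE: $P_{g_0} w \ge Q_0(e^{4w}-1)$ has zero average while the right side, by the volume identity, also has zero average, so the inequality must be an equality, giving $P_{g_0} w = Q_0(e^{4w}-1)$ on $S^4_+$ with $w = \partial_\nu w = 0$ on $\partial S^4_+$. Doubling across the totally geodesic boundary (the Neumann conditions are exactly what is needed for the doubled metric to be $C^{1,1}$, and the Paneitz equation is preserved) produces a solution of $P_{g_0}\tilde w = Q_0(e^{4\tilde w}-1)$ on the closed sphere $S^4$, i.e. a metric conformal to $g_0$ with $Q \ge Q_0$ and equal total $Q$-curvature; the rigidity of constant $Q$-curvature metrics on $S^4$ (Gursky, or the classification via the kernel of $P_{g_0}$ consisting of first spherical harmonics) then forces $\tilde w$ to be a conformal-factor of a Möbius transformation, and the boundary condition $\tilde w|_{\partial S^4_+} = 0$ together with oddness of $\partial_\nu$ pins down $\tilde w \equiv 0$.

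The main obstacle I expect is the last step — ruling out the non-trivial Möbius solutions on the doubled sphere. Unlike the second-order Gelfand case, where Hang-Wang's hemisphere rigidity does this for us, here I cannot simply quote Theorem A because the boundary hypothesis is different; I would need a maximum-principle or symmetrization argument adapted to the Paneitz operator, using that the reflection symmetry across $\partial S^4_+$ is incompatible with the concentration profile of a non-trivial bubble unless the bubble is trivial. A secondary technical point is justifying the integration by parts $\int_{S^4_+} P_{g_0} w\, dv_0 = 0$: this needs the full set of natural boundary conditions for $P_{g_0}$ on a totally geodesic boundary, which is where hypotheses (2) and (3) enter, and one should check these conditions carefully against the definition \eqref{P} rather than treat them as automatic.
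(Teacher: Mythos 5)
Your plan breaks down at the step where you claim a two-sided volume identity, and this gap is fatal to everything that follows. In dimension $4$ the operator $P_{g_0}$ applied to $w$ is a pure divergence, so $\int_{S^4_+}P_{g_0}w\,dv_0$ is a boundary integral whose leading term is $\int_{\partial S^4_+}\partial_\nu\Delta_0 w\,d\sigma$, a \emph{third} normal derivative of $w$; the conditions $w=\partial_\nu w=0$ do not make it vanish. In fact, integrating the transformation law \eqref{Q4} and using the conformal invariance of $\int Q\,dv+\int T\,ds$ (with $T_{g_0}\equiv 0$) shows that $\int_{S^4_+}P_{g_0}w\,dv_0=-\int_{\partial S^4_+}T_g\,d\sigma_g$, which hypothesis (4) only forces to be $\leq 0$. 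Consequently, integrating $P_{g_0}w\geq Q_0(e^{4w}-1)$ yields $Q_0\int_{S^4_+}(e^{4w}-1)\,dv_0\leq -\int_{\partial S^4_+}T_g\,d\sigma_g\leq 0$, i.e.\ exactly the same one-sided inequality $\mathrm{Vol}(g)\leq\mathrm{Vol}(g_0)$ that your Gauss--Bonnet--Chern computation already gave; your two "independent" integrations are the same computation, and the claimed reverse inequality $\int(e^{4w}-1)\,dv_0\geq 0$ is never obtained. Without it there is no equality case, hence no reduction to $P_{g_0}w=Q_0(e^{4w}-1)$, and the doubling/classification stage never starts. Even granting equality, the doubling itself is shaky: even reflection with only $w=\partial_\nu w=0$ gives $C^{1,1}$ matching, which is not enough for a fourth-order equation unless the odd-order normal derivatives (in particular $\partial_\nu\Delta w$, again the pointwise $T$-term) also vanish on the interface; and you acknowledge yourself that the exclusion of nontrivial M\"obius solutions on the doubled sphere is left open.

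For comparison, the paper's proof is not an integral argument at all. It passes to cylindrical coordinates $g_0=e^{2w}((dt)^2+g_{S^3})$ with $w=-\ln\cosh t$, writes $g=e^{2u}((dt)^2+g_{S^3})$, and takes the spherical average $v(t)$ of $u$ over $S^3$; hypothesis (1) together with Jensen's inequality gives the ODE inequality $v''''-4v''\geq 6e^{4v}$, hypotheses (2)--(3) give $v(0)=v'(0)=0$, smoothness at the pole gives $v'(\infty)=-1$, and hypothesis (4) enters only through $v'''(0)=2\int_{S^3}T\,d\sigma\geq 0$ (via $\partial_\nu R_g=6\,\partial_t^3u$ on the boundary). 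The ODE rigidity result, Proposition~\ref{P4}, proved by a maximum principle plus the monotone first integral $v'''v'-\tfrac12 v''^2-2v'^2-\tfrac32 e^{4v}$, then forces $v=-\ln\cosh t$, and equality in Jensen recovers $u\equiv w$. If you want to salvage your conformal-invariance computation, note that it only produces the volume inequality $\mathrm{Vol}(g)\leq\mathrm{Vol}(g_0)$, which is strictly weaker than what the averaging-plus-ODE argument extracts from the same hypotheses.
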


There are several generalizations of Hang-Wang's theorem on locally conformlly flat manifolds, e.g., ~\cite{R},~\cite{S}. More recently, Barbosa-Cavalcante-Espinar~\cite{BCE} proved a fully nonlinear version of Hang-Wang's theorem, where the scalar curvature is replaced by some fully nonlinear operator on eigenvalues of the Schouten tensor. Our work explores its higher order extension. 

From analytical point of view, such hemisphere rigidity phenomenon hinges on the classification of the entire solution of
\[
\Delta^2 u= e^{4u}  \quad \text{for} \quad n=4; \quad \Delta^2 u=u^{\frac{n+4}{n-4}} \quad \text{for}\quad n\neq 5,
\]
which was obtained by Chang-Yang~\cite{CY}, Lin~\cite{L}, Choi-Xu~\cite{CX}, Wei-Xu~\cite{WX}. 

There is a parallel study of Gelfand problem for bi-Lapalce operator on general domain $\Omega\subset \mathbb{R}^n$, see e.g.~\cite{AGGM,DDGM}. There are usually two sets of boundary conditions, namely the Dirichlet boundary condition and the Navier boundary condition.
\begin{align}  
 \begin{cases}\Delta^{2} u=\lambda f(u) & \text { in } \Omega \\ u=\partial_{\nu} u=0 & \text { on } \partial \Omega;\end{cases} \tag{$D_{\lambda}$}
\end{align}
\begin{align}  \tag{$N_{\lambda}$}
 \begin{cases}\Delta^{2} u=\lambda f(u) & \text { in } \Omega \\ u=\Delta u=0 & \text { on } \partial \Omega.\end{cases}
\end{align}

Many of the properties of ($G_{\lambda}$) hold for Navier boundary condition (c.f.~\cite{FG}). A fundamental difficulty for Dirichlet boundary condition on general domain is the lack of maximum principle. However, thanks to Boggio's maximum principle~\cite{Bo}, when $\Omega=B_1$, similar properties for $(D_{\lambda})$ hold: (c.f.~\cite{W}) 

There exists an extremal $\lambda^{*}<\infty$, such that 
\begin{itemize}
  \item for $\lambda\in(0, \lambda^{*})$, there exists a {\bf classical minimal solution} $u_{\lambda}$;
  \item for $\lambda>\lambda^{*}$, there exists no solution;
  \item the map $\lambda\to u_{\lambda}(x)$ is increasing in $(0, \lambda^{*})$ for each $x\in \Omega$, and $u_{\lambda^{*}}:=\lim_{\lambda\to \lambda^{*}} u_{\lambda}$ is a weak solution of ($D_{\lambda^{*}}$).
\end{itemize}

Our main theorems essentially study supersolutions of ($D_{\lambda}$) under an extra condition. It is either scalar curvature lower bound or $T$ curvature lower bound, which can be interpreted as a second or a third order boundary condition on the conformal factor respectively. What we could verify at this moment is that the solution corresponding to the standard round metric is within the branch of minimal solutions of $(D_{\lambda})$. In fact, we make some speculations on the extremal $\lambda^{*}$. It is seen that $\lambda_0 \in(0, \lambda^{*})$, where $\lambda_0$ is the constant corresponding to the $Q$ curvature of the spherical metric. Hence extra assumptions may well be needed in our theorems. 

The organization of the paper is as follows: in Section~\ref{S2}, we prove Theorem~\ref{T2} and Theorem~\ref{T1}. In Section~\ref{S3}, we discuss the connection of our main results with the fourth order Gelfand problem.
\vspace{0.2in}
\section{Proofs of main theorems} \label{S2}

In this section, we prove Theorem~\ref{T2} and Theorem~\ref{T1}. 

Since the standard hemisphere is conformal to the Euclidean ball, we write $g=u^{\frac{4}{n-4}} g_0$ ($n\neq 4$) and $g=e^{2u} g_0$ ($n=4)$ where $g_0$ is the Euclidean metric. Then by (\ref{Q}), $u$ satisfies 

\textbf{1. $n\geq 5$}
 \begin{align}\label{equ5}
\begin{cases}
\Delta^{2}u\geq Qu^{\frac{n+4}{n-4}} & \text{in} \quad B_{1}\\
u=u_{s}=1 & \text{on} \quad \partial B_{1}\\
\frac{\partial u}{\partial \nu}=\frac{\partial u_{s}}{\partial \nu}=-\frac{n-4}{2} &\text{on} \quad  \partial B_{1},
\end{cases}
\end{align}
where
$Q=\frac{(n-4)(n-2)n(n+2)}{16}$,
$u_{s}=(\frac{2}{1+|x|^{2}})^{\frac{n-4}{2}}$ and $\nu$ is the unit outer normal vector on $\partial B_1$.

\textbf{2. $n=4$}
 \begin{align}\label{equ6}
\begin{cases}
\Delta^{2}u  \geq 6e^{4u} & \text{in} \quad B_{1}\\
u=u_s=0 & \text{on} \quad \partial B_{1}\\
\frac{\partial u}{\partial \nu}=\frac{\partial u_s}{\partial \nu}=-1 &\text{on} \quad  \partial B_{1}.
\end{cases}
\end{align}
Here $u_s=\ln (\frac{2}{1+|x|^2})$.

\textbf{3. $n=3$}
 \begin{align}\label{equ7}
\begin{cases}
\Delta^{2}u+ \frac{15}{16}u^{-7} \leq 0 & \text{in} \quad B_{1}\\
u=1 & \text{on} \quad \partial B_{1}\\
\frac{\partial u}{\partial \nu}=\frac{1}{2} &\text{on} \quad  \partial B_{1}.
\end{cases}
\end{align}
Here $u_s=\sqrt{\frac{1+|x|^2}{2}}$.

Note $u_s$ above are conformal factors of the standard metric on the hemisphere in different dimensions. 

\begin{proof}[Proof of Theorem~\ref{T2}]\ 

The idea of the proof is as follows. By assumption, the conformal factor can be interpretted as a supersolution (subsolution for $n=3$) of a fourth order semilinear elliptic equation. We show that after a spherical average, it is still a supersolution (subsolution for $n=3$). We then use the boundary conditions and the maximum principle to get the conclusion. Since the proofs are similar, we omit most of routine proofs and only indicate essential points in dimension $4$ and $3$.

{\bf Case 1: $n\geq 5$.}

Let $v(t)=\frac{1}{|S^{n-1}(t)|} \int_{S^{n-1}(t)} u d\sigma:=\overline{u}$ be the spherical average of $u$, using the well-known formula 
\[
\overline{\Delta u}=\Delta \overline{u}
\] and the Jensen inequality, $v$ satisfies (\ref{equ5}). Note $v$ is radially symmetric. 

We {\bf claim} $v\geq u_s$. To this end, we write $u_{s}=u_{0}$, $v=u_1$, and let $u_2=tu_0+(1-t)u_1$. we construct a sequence of functions $u_{k}$ $(k\geq 3$) satisfying
$$
\begin{cases}\Delta^{2} u_{k}=Q u_{k-1}^{\frac{n+4}{n-4}} & \text { in } B_{1} \\ u_{k}=1 & \text { on } \quad \partial B_{1} \\ \frac{\partial u_{k}}{\partial n}=-\frac{n-4}{2} & \text { on } \quad \partial B_{1}.\end{cases}
$$
Note
$$
\Delta^{2} u_{3}=Q\left(t u_{0}+(1-t) u_{1}\right)^{\frac{n+4}{n-4}} \leq Q\left(t u_{0}^{\frac{n+4}{n-4}}+(1-t) u_{1}^{\frac{n+4}{n-4}}\right)\leq \Delta^{2} \left(tu_0+(1-t)u_1\right).
$$
It follows from the comparison theorem (Theorem $5.6$ in ~\cite{GGS}) that
$$
u_{3} \leq u_{2} .
$$
Inductively, we have a monotone decreasing sequence of functions
$$
u_{2} \geq u_{3} \geq \cdots.
$$
Clearly we have a uniform lower barrier
$$
u_{k}(x) \geq \frac{n}{4}-\frac{n-4}{4}|x|^{2} .
$$
Using the representation of Green's function for bi-Laplace equation and monotone convergence theorem, there exists a limit
$$
\tilde{u}(x)=\lim _{k \rightarrow \infty} u_{k}(x),
$$
which solves 
\begin{align} \notag
\begin{cases}
\Delta^{2}u=Qu^{\frac{n+4}{n-4}} & \text{in}\quad B_{1}\\
u=1 & \text{on} \quad \partial B_{1}\\
\frac{\partial u}{\partial n}=-\frac{n-4}{2} & \text{on} \quad \partial B_{1}.\end{cases}
\end{align}
According to Proposition~\ref{P2} (in the next section), $u_s$ is the minimal solution, and thus 
$v\geq u_s$.

Then 
\[
\Delta^2 v-\Delta^2 u_s\geq Q v^{\frac{n+4}{n-4}}-Qu_s^{\frac{n+4}{n-4}}\geq 0, \quad \text{in $B_1$.}
\]
Appealing to the conformal change of the scalar curvature:
\[
\frac{4(n-1)}{n-2} \Delta(u^{\frac{n-2}{n-4}}) + R_g u^{\frac{n+2}{n-4}}=0,
\]
we have that $\Delta u(1)\leq -\frac{(n+2)(n-4)}{4}$, consequently
$\Delta v(1)\leq -\frac{(n+2)(n-4)}{4}=\Delta u_s(1)$. 
It follows from the maximum principle that $\Delta (v-u_s)\leq 0$ in $B_1$, and $v-u_s=\frac{\partial v-u_s}{\partial \nu}=0$ on $\partial B_1$. It follows from the Hopf lemma that $v\equiv u_s$, and consequently that $u\equiv u_s$.

{\bf Case 2: $n=4$.} This case is not much different from case 1. The essential point is that the nonlinearity $f(u)=e^{4u}$ is increasing and convex. 

{\bf Case 3: $n=3$.} We just need to reverse directions of some inequalities: 1. $v\leq u_s$; 2. $\Delta v(1)\geq \Delta u_s(1)=\frac{5}{4}$. 

\end{proof}

For Theorem~\ref{T1}, We first establish a uniqueness result for a fourth order differential inequality. The theorem then follows from a similar spherical average method.

\begin{prop} \label{P4} Let $v: [0,\infty) \to \mathbb{R}$ be a smooth function satisfying
\begin{align} \label{2}
v''''(t)-4v''(t)\geq 6e^{4v}
\end{align}
with $v(0)=0$, $v'(0)=0$, $v'''(0)\geq 0$ and $\lim_{t\to \infty} v'(t)=-1$. Then $v=-\ln \cosh(t)$.
\end{prop}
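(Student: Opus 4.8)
The plan is to study the ODE $v'''' - 4v'' \geq 6e^{4v}$ by exploiting the fact that the sought solution $\varphi(t) := -\ln\cosh t$ satisfies the corresponding equality $\varphi'''' - 4\varphi'' = 6e^{4\varphi}$, with $\varphi(0)=0$, $\varphi'(0)=0$, $\varphi''(0)=-1$, $\varphi'''(0)=0$, and $\varphi'(t)\to -1$ as $t\to\infty$. First I would set $w = v - \varphi$ and write down the linearized-at-the-right-place inequality: since $e^{4v} \geq e^{4\varphi}(1 + 4w)$ by convexity of the exponential, we get
\begin{align}\notag
w'''' - 4w'' \geq 6e^{4\varphi}\bigl(e^{4w}-1\bigr) \geq 24 e^{4\varphi} w = \frac{24}{\cosh^4 t}\, w,
\end{align}
with initial data $w(0)=0$, $w'(0)=0$, $w'''(0)\geq 0$, and $w'(t)\to 0$ at infinity. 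The goal becomes to show $w\equiv 0$; equivalently, to prove a maximum/comparison principle for the operator $L w := w'''' - 4w''$ against the (sign-definite) potential term.

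The key step is to integrate the inequality cleverly using the boundary data at both ends. Introduce $p := w'' - \text{(something)}$ or, more robustly, integrate $L w \geq c(t) w$ against suitable weights. A clean route: let $h(t) = w''(t) - 4\int_0^t\!\!\int_0^s w''(\tau)\,d\tau\,ds$ — actually cleaner is to note $Lw = (w'' - 4w)'' + 4(w'' - 4w) \cdot 0$... let me instead use the factorization $w''''-4w'' = (D^2)(D^2-4)w$ where $D = d/dt$. So if I set $z := (D^2 - 4)w = w'' - 4w$, then $z'' \geq c(t)w$ with $c(t) = 24/\cosh^4 t \geq 0$. The plan is then a two-stage argument: first extract sign information on $z$ and $w$ near $t=0$ from the initial conditions ($w,w'$ vanish, $w'''(0)\geq 0$ forces $z'(0) = w'''(0) - 4w'(0) \geq 0$, and $z(0) = -4w(0) = 0$), then propagate it. Specifically, I expect to argue by contradiction: if $w$ is not identically zero, consider the first point (or use the structure near $0$) where $w$ becomes positive, show $z'' \geq 0$ there so $z$ is convex, combine with $z(0)=0$, $z'(0)\geq 0$ to get $z \geq 0$, i.e. $w'' \geq 4w > 0$ on an interval, which makes $w$ and $w'$ grow — contradicting $w'\to 0$ at infinity. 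The symmetric case $w<0$ needs the decay condition $v'(t)\to -1$ together with a barrier comparison (comparing with $\varphi$ from above) to rule it out; here the asymptotic behavior $\varphi'(t)\to -1$ is what pins down the solution against the one-parameter family of decaying perturbations.

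The main obstacle I anticipate is handling the case $w < 0$ (equivalently $v < \varphi$): the initial-condition argument naturally controls $w\geq 0$ behavior via convexity of $z$, but when $w<0$ the forcing term $c(t)w$ is negative and the convexity argument runs the wrong way. I would deal with this by using the condition $\lim_{t\to\infty}v'(t) = -1$ quantitatively — integrate $z'' = w'''' - 4w'' \geq c(t)w$ over $[0,\infty)$ and use that $w', w'', w'''$ must all have limits (with $w'\to 0$) to control boundary terms at infinity, obtaining an integral identity like $\int_0^\infty (\text{nonneg weight})\, |w|^2 \leq (\text{boundary terms that vanish})$, forcing $w\equiv 0$. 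A delicate point is justifying the vanishing of all boundary terms at $+\infty$, which requires first establishing that $w''$ and $w'''$ decay (not just $w'$); this follows by feeding the pointwise bound $e^{4\varphi}=\cosh^{-4}t = O(e^{-4t})$ back into the equation $w'''' = 4w'' + O(e^{-4t})(e^{4w}-1)$ and using linear-ODE asymptotics for $D^2(D^2-4)$ — the operator $D^2-4$ has the decaying mode $e^{-2t}$ while $D^2$ contributes the constant and linear modes, so the condition $w'\to 0$ kills the linear mode and one must check the constant mode is also killed, again using $v\to$ finite limit consistent with $\varphi$. Once the relevant derivatives are shown to decay, the integral-identity argument closes and yields $v = -\ln\cosh t$.
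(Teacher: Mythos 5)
Your reduction $w=v-\varphi$ with $\varphi=-\ln\cosh t$ and the factorization $z=w''-4w$, $z''\geq c(t)w$ is fine as far as it goes, but the initial data you assign to $z$ is wrong: $z(0)=w''(0)-4w(0)=w''(0)=v''(0)+1$, which is not known to vanish — and determining $v''(0)$ is precisely the heart of the problem, for which your proposal contains no mechanism. The paper's proof devotes its first half to this: from $(v'''-4v')'\geq 6e^{4v}>0$ and $v'''(0)-4v'(0)\geq 0$ one gets $v'''-4v'\geq 0$ on $[0,\infty)$, whence a maximum-principle argument on $v'$ (using $v'(0)=0$ and $v'(\infty)=-1$) gives $v'\leq 0$ and in particular $v''(0)\leq 0$; then multiplying the differential inequality by $v'\leq 0$ yields the monotone energy $E=v'''v'-\tfrac12 (v'')^2-2(v')^2-\tfrac32 e^{4v}$, $E'\leq 0$, and comparing $E(0)$ with its limit at infinity gives $v''(0)^2\leq 1$. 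Only after this does a comparison with $\varphi$ at the level of $(v-\varphi)''$ (your $w''$) close the argument, split into the equality case $v''(0)=-1$ (where the energy identity forces the ODE to hold with equality and one then rules out $v'''(0)>0$) and the case $-1<v''(0)\leq 0$ (ruled out by the same second-derivative comparison). Without the sign of $v'$ and the bound $v''(0)^2\leq 1$, your convexity-of-$z$ propagation cannot start ($z(0)$ and even its sign are unknown), and the possibility $v''(0)<-1$, i.e. $w''(0)<0$, is not addressed at all.

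The fallback you propose for the case $w<0$ — an identity of the form $\int_0^\infty(\text{nonnegative weight})\,|w|^2\leq(\text{vanishing boundary terms})$ — does not follow from a one-sided inequality: multiplying $w''''-4w''\geq c(t)w$ by $w$ preserves the inequality only where $w\geq 0$, and integrating the inequality once in $t$ merely gives a sign-indefinite bound on $\int_0^\infty c\,w$, not a coercive quadratic estimate. This is exactly the point where the paper instead multiplies by $v'$, whose sign has already been established, rather than by $w$. So the proposal has a genuine gap: the missing ingredients are the sign of $v'$ and the first-integral (energy) comparison between $t=0$ and $t=\infty$ that pins down $v''(0)$ before any comparison with $\varphi$ is attempted.
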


\begin{proof} Since
\[
\left( v'''(t)-4v'(t)\right)'\geq 0,
\]
and  $v'''(0)-4v'(0)\geq 0$, we infer that $v'''(t)-4v'(t)\geq 0$ for all $t\in \mathbb{R}^+$. By the maximum principle, $v'(t)$ cannot attain any positive local maximum. In view of $v'(0)=0$ and $\lim_{t\to \infty} v'(t)=-1$, it follows that $v'(t)\leq 0$, $\forall t\in \mathbb{R}^+$. Consequently $v''(0)\leq 0$.

Multiplying $v'(t)$ to (\ref{2}), we find
\[
(v'''v'-\frac{1}{2}v''^2-2 v'^2-\frac{3}{2} e^{4v})'\leq 0.
\]
Hence
\[
(v'''v'-\frac{1}{2}v''^2-2 v'^2-\frac{3}{2} e^{4v})(0)\geq \lim_{t\to \infty} (v'''v'-\frac{1}{2}v''^2-2 v'^2-\frac{3}{2} e^{4v})(t).
\]
It follows that $v''(0)^2\leq 1$.

We divide the discussion into two cases:

\textbf{Case 1: $v''(0)=-1$.}\

We then have
\[
(v'''v'-\frac{1}{2}v''^2-2 v'^2-\frac{3}{2} e^{4v})(0)= \lim_{t\to \infty} (v'''v'-\frac{1}{2}v''^2-2 v'^2-\frac{3}{2} e^{4v})(t).
\]
Thus
\[
(v'''v'-\frac{1}{2}v''^2-2 v'^2-\frac{3}{2} e^{4v})'\equiv 0,
\] and consequently
\[
v''''(t)-4v''(t)=6e^{4v(t)}.
\]

Note $w:=-\ln \cosh(t)$ is a global solution with initial value $w(0)=w'(0)=w'''(0)=0$ and $w''(0)=-1$. We {\bf claim} $v\equiv w$. 

It suffices to rule out the possibility that $v'''(0)>0$. 
Suppose on the contrary that $v'''(0)>0$. Then $(v-w)''(t)>0$ for $t$ small. Let $T$ be the first zero (if there were)of $(v-w)''$, i.e.,   $(v-w)''(t)>0$ for $t\in(0, T)$, and $(v-w)''(T)=0$.
Consequently $(v-w)'(t)>0$, $(v-w)(t)>0$ for $t\in (0, T)$. Hence 
\[
(v-w)''''-4(v-w)''=6e^{4v}-6e^{4w} \geq 0, \quad t\in[0,T]. 
\]
A contradiction arises by looking at an interior local maximum of $(v-w)''$ in $[0,T]$. Hence $(v-w)''(t)>0$, for any $t>0$. 
This then contradicts the fact that $(v-w)'(0)=0$ and $(v-w)'(\infty)=0$.

\textbf{Case 2: $-1<v''(0)\leq 0$.} \

We shall show this case does not occur.  Note we have $v(0)=w(0)=v'(0)=w'(0)$, $v''(0)>w''(0)$ and $v'''(0)\geq w'''(0)$. 
Thus $(v-w)''(t)>0$ for $t$ small. Let $T$ be the first zero (if there were) of $(v-w)''$, i.e., $(v-w)''(t)>0$ for $t\in(0, T)$, and $(v-w)''(T)=0$. 
Consequently $(v-w)'(t)>0$, $(v-w)(t)>0$ for $t\in (0, T)$.
Hence 
\[
(v-w)''''-4(v-w)''\geq 6e^{4v}-6e^{4w} \geq 0, \quad t\in[0,T]. 
\]
A contradiction arises by looking at an interior local maximum of $(v-w)''$ in $[0,T]$. Hence $(v-w)''(t)>0$, for any $t>0$. 
This then contradicts the fact that $(v-w)'(0)=0$ and $(v-w)'(\infty)=0$.
\end{proof}

\begin{proof}[Proof of Theorem~\ref{T1}]
Under the cylindrical coordinates, the standard metric is
\[
g_0= e^{2w}\left( (dt)^2+ g_{S^3}\right),
\]
with $w=-\ln \cosh(t)$.
Note $t\in[0,\infty)$ and $\partial S^{4}_{+}$ corresponds to $t=0$.
Assume $g=e^{2u} \left( (dt)^2+ g_{S^3}\right)$. By (\ref{Q4}), the condition $Q_g\geq Q_{g_0}$ is equivalent to
\begin{align} \label{1}
(\frac{\partial^2 }{\partial t^2}+\Delta_{S^3})^2 (u)-4\frac{\partial^2 u}{\partial t^2}\geq 6 e^{4u}.
\end{align}

We consider the spherical average of $u$:
\[
v(t)=\frac{1}{\rm v_3} \int_{S^3} u(t, x) dx,\]
where ${\rm v_3}={\rm vol}(S^3)$, the volume of standard $3$-sphere.
It follows that
\begin{align}\notag
v''''(t)&=\frac{1}{\rm v_3}\int_{S^3} \frac{\partial^4 u}{\partial t^4}(t, x) dx \\ \notag
        &\geq \frac{1}{\rm v_3} \int_{S^3} -(\Delta_{S^3})^2(u)-2 \Delta_{S^3} (\frac{\partial^2 u}{\partial t^2})+4\frac{\partial^2 u}{\partial t^2} +6 e^{4u} dx \\ \notag
        &=4 v''(t)+\frac{6}{\rm v_3}\int_{S^3} e^{4u} dx \\ \label{6}
        &\geq 4 v''(t)+ 6e^{4v}.
\end{align}
We have used (\ref{1}) and the Jensen's inequality $\frac{1}{\rm v_3}\int_{S^3} e^{4u} dx \geq e^{\frac{1}{\rm v_3} \int_{S^3} 4u dx}$. Hence $v(t)$ satisfies the differential inequality (\ref{2}).

We next verify the boundary conditions of $v$. In view of boundary conditions of the metric $g$, we have
\[
u(0, x)\equiv 0 \quad \text{and} \quad \frac{\partial u}{\partial t}(0,x)\equiv 0.
\]
Thus $v(0)=0$ and $v'(0)=0$. Moreover since $t=\infty$ corresponds to a smooth interior point, it follows that $v'(t)\to -1$ as $t\to \infty$. Consequently $v''(t)\to 0$ and $v'''(t)\to 0$ as $t\to \infty$.

By the transformation law of the scalar curvature, we have
\[
6\Delta_0 (e^u)+ R_{g} e^{3u} = e^u.
\]
Note that the unit normal $\nu$ is the same as the unit normal w.r.t $g_0$, which coincides with $-\frac{\partial}{\partial t}$ on the boundary. Also note $\Delta_0$ is w.r.t the product metric $(dt)^2+ g_{S^3}$. Taking $u=0$ and $\frac{\partial u}{\partial \nu}=0$ on the boundary into consideration, we obtain that $\frac{\partial R_g}{\partial \nu}=6\frac{\partial^3 u}{\partial t^3}$.
According to the definition of $T$, we find that 
\[
v'''(0)=\frac{1}{\rm v_3} \int_{S^3} \frac{\partial ^3 u}{\partial t^3}(x, 0) dx= 2 \int_{S^3 } T dx\geq0.
\]

By Proposition~\ref{P4}, $v\equiv w$. Hence all inequalities in (\ref{6}) are equalities. It follows that $u$ is radially symmetric and thus $u=v=w$.
\end{proof}

%
\vspace{0.2in}
\section{Further results and discussions} \label{S3}

In this section, we discuss some connections of our main theorems with the Gelfand problem for bi-Laplace operator. Since $n=3$ leads to a different picture, we shall assume from now on that $n\geq 4$. Recall first the fundamental properties analogous to the second order Gelfand problem. 

\begin{thm}[Warnault~\cite{W}] \label{T3}
For the Dirichlet boundary value problem ($D_{\lambda}$)
\begin{align}  
 \begin{cases}\Delta^{2} u=\lambda f(u) & \text { in } \Omega \\ u=\partial_{\nu} u=0 & \text { on } \partial \Omega,\end{cases} 
 \tag{$D_{\lambda}$}
\end{align}
where $f$ is a nonlinearity satisfying 
\begin{align} \tag{$\star$}
f \quad \text{nondecreasing}, \quad f(0)>0, \quad \text{and} \lim_{t\to\infty} \frac{f(t)}{t} =\infty,
\end{align}
there exists an extremal $\lambda^{*}<\infty$, such that 
\begin{itemize}
  \item for $\lambda\in(0, \lambda^{*})$, there exists a {\bf classical minimal solution} $u_{\lambda}$;
  \item for $\lambda>\lambda^{*}$, there exists no solution;
  \item the map $\lambda\to u_{\lambda}(x)$ is increasing in $(0, \lambda^{*})$ for each $x\in \Omega$, and $u_{\lambda^{*}}:=\lim_{\lambda\to \lambda^{*}} u_{\lambda}$ is a weak solution of ($D_{\lambda^{*}}$).
\end{itemize}
\end{thm}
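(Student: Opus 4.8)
The plan is to carry out, on $\Omega=B_1$, the fourth-order analogue of the Brezis--Cazenave--Martel--Ramiandrisoa theory~\cite{BCMR}. The only structural input that fails for $\Delta^2$ on a general domain is the maximum principle; on the ball it is replaced by Boggio's maximum principle~\cite{Bo}, which says the Green function $G$ of $\Delta^2$ under the boundary conditions $u=\partial_\nu u=0$ is strictly positive on $B_1\times B_1$, so the solution operator $Th:=\int_{B_1}G(\,\cdot\,,y)h(y)\,dy$ is compact and order-preserving on $C(\overline{B_1})$. Two auxiliary positive functions will be used repeatedly: the torsion-type function $\psi$ with $\Delta^2\psi=1$ in $B_1$, $\psi=\partial_\nu\psi=0$ on $\partial B_1$ (positive by Boggio), and the first Dirichlet eigenfunction $\phi_1>0$ of $\Delta^2$ on $B_1$, with eigenvalue $\lambda_1$, which exists and is positive by the Krein--Rutman theorem applied to $T$. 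Writing $\Lambda:=\{\lambda>0:(D_\lambda)\text{ admits a positive classical solution}\}$, the theorem says precisely that $\Lambda$ is a nonempty interval, $\lambda^*:=\sup\Lambda<\infty$, the minimal solutions $u_\lambda$ exist and are strictly increasing in $\lambda$, and $u_{\lambda^*}:=\lim_{\lambda\uparrow\lambda^*}u_\lambda$ is a weak solution.

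For existence at small $\lambda$, note $u\equiv0$ is a subsolution because $f(0)>0$, while $\bar u:=2f(0)\lambda\psi$ is a supersolution as soon as $2f(0)\ge f\big(2f(0)\lambda\|\psi\|_\infty\big)$, which holds for $\lambda$ small since $f$ is continuous and $f(0)<2f(0)$. Iterating $u_0=0$, $u_{k+1}=\lambda Tf(u_k)$ and using that $f$ is nondecreasing and $T$ order-preserving (this is exactly the comparison theorem, Theorem 5.6 of~\cite{GGS}) gives $0=u_0\le u_1\le\cdots\le\bar u$; by elliptic regularity the limit $u_\lambda$ is a classical solution, and it is minimal because any positive solution $v$ satisfies $v\ge u_k$ for every $k$ by the same order argument. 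If $\lambda\in\Lambda$ and $0<\mu<\lambda$, then $u_\lambda$ is a supersolution of $(D_\mu)$, so $\mu\in\Lambda$ and $u_\mu\le u_\lambda$: hence $\Lambda$ is an interval and $\lambda\mapsto u_\lambda$ is nondecreasing. Strictness follows from
\[
\Delta^2(u_\lambda-u_\mu)=\lambda f(u_\lambda)-\mu f(u_\mu)\ge\mu\big(f(u_\lambda)-f(u_\mu)\big)+(\lambda-\mu)f(0)\ge(\lambda-\mu)f(0)>0
\]
combined with the strict positivity of $G$.

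Finiteness of $\lambda^*$ is where the superlinearity $\lim_{t\to\infty}f(t)/t=\infty$ enters: it furnishes a constant $C>0$ with $f(t)\ge t-C$ for all $t\ge0$. Testing $(D_\lambda)$ against $\phi_1$ (integrating by parts twice, the boundary terms vanishing) gives $\lambda_1\int u_\lambda\phi_1=\lambda\int f(u_\lambda)\phi_1\ge\lambda\int u_\lambda\phi_1-\lambda C\int\phi_1$. On the other hand $\Delta^2u_\lambda=\lambda f(u_\lambda)\ge\lambda f(0)$ forces $u_\lambda\ge\lambda f(0)\psi$, hence $\int u_\lambda\phi_1\ge\lambda f(0)\int\psi\phi_1=:\lambda f(0)c_0>0$. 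For $\lambda>\lambda_1$ the two inequalities combine to $(\lambda-\lambda_1)f(0)c_0\le C\int\phi_1$, which bounds $\lambda$; thus $\lambda^*<\infty$, and by downward-closedness $(D_\lambda)$ has no solution for $\lambda>\lambda^*$.

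Finally, $u^*:=\lim_{\lambda\uparrow\lambda^*}u_\lambda$ exists pointwise by monotonicity, and the estimate above shows $\int f(u_\lambda)\phi_1$ stays bounded as $\lambda\uparrow\lambda^*$; since Boggio's kernel satisfies $G(x,y)\le C_x\,\delta(y)^2$ with $\delta(y):=\mathrm{dist}(y,\partial B_1)$ and $\phi_1$ is comparable to $\delta^2$ near $\partial B_1$, monotone convergence in the identity $u_\lambda=\lambda Tf(u_\lambda)$ gives $u^*=\lambda^*Tf(u^*)$ with $f(u^*)\in L^1(B_1;\delta^2\,dx)$, i.e. $u^*$ is a weak solution of $(D_{\lambda^*})$, and $u^*=\lim_{\lambda\uparrow\lambda^*}u_\lambda$ by construction. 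I expect this last limit passage to be the main obstacle: one must ensure $u^*<\infty$ a.e. and that $f(u^*)$ is integrable against the correct boundary weight, which is exactly what the $\phi_1$-test estimate and the sharp boundary decay of Boggio's Green function provide. A secondary point requiring care throughout is that every comparison step must be justified by the ball-specific positivity ($G>0$, $\psi>0$, $\phi_1>0$) rather than an unavailable maximum principle, and that only monotonicity of $f$, not convexity, is used in the iteration and in minimality.
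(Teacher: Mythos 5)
The paper does not prove this statement at all: it is quoted verbatim from Warnault~\cite{W} (building on the framework of \cite{AGGM}, \cite{BCMR}), so there is no ``paper proof'' to compare against, and your proposal should be judged against that reference. Your argument is in fact the standard one used there --- Boggio positivity of the Green function on the ball, sub/supersolution monotone iteration for small $\lambda$ and for downward closedness of the solution set, testing against the positive first Dirichlet eigenfunction $\phi_1$ to get $\lambda^*<\infty$, and a weighted $L^1$ bound plus the kernel estimate $G(x,y)\lesssim_x\delta(y)^2$, $\phi_1\asymp\delta^2$, to pass to the weak solution at $\lambda^*$ --- and it is essentially correct. Two small points need attention. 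First, the statement as printed says ``$\Omega$'', but the result (and your proof) is for $\Omega=B_1$, where Boggio's principle~\cite{Bo} and the comparison theorem of~\cite{GGS} are available; your restriction to the ball is the right reading and consistent with the paper's introduction. Second, your final uniform bound is stated too quickly: with the slope-one inequality $f(t)\ge t-C$, the eigenfunction identity $\lambda_1\int u_\lambda\phi_1=\lambda\int f(u_\lambda)\phi_1$ only yields $(\lambda-\lambda_1)\int u_\lambda\phi_1\le \lambda C\int\phi_1$, which controls $\int f(u_\lambda)\phi_1$ only when $\lambda^*>\lambda_1$; if $\lambda^*\le\lambda_1$ the inequality is vacuous. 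The fix is immediate and uses the same hypothesis: since $f(t)/t\to\infty$, for any $M$ there is $C_M$ with $f(t)\ge Mt-C_M$, and choosing $M$ with $M\lambda>\lambda_1$ uniformly for $\lambda$ near $\lambda^*$ (e.g. $M=4\lambda_1/\lambda^*$ and $\lambda\ge\lambda^*/2$) gives $\int u_\lambda\phi_1\le \lambda C_M\int\phi_1/(M\lambda-\lambda_1)$, hence the required uniform bound on $\int f(u_\lambda)\phi_1$ and finiteness of $u_{\lambda^*}$ a.e.; the rest of your limit passage then goes through.
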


We are concerned with (\ref{equ5}) and (\ref{equ6}). Even though we are dealing with non zero boundary conditions, simply by adding a linear function we may get a modified nonlinearity $f$ on the right hand satisfying ($\star$) and have Dirichlet boundary condition. The same conclusion of Theorem~\ref{T3} holds. Hence for simplicity, we may just regard $f(u)=e^{4u}$ when $n=4$ and $f(u)=(1+u)^{\frac{n+4}{n-4}}$ when $n\geq 5$.

As pointed out in the Introduction, the hemisphere rigidity theorem can yield extreme $\lambda^{*}$. It is worth to point out that in ~\cite{DDGM}, it is shown that any  weak supersolution of $(D_{\lambda^{*}})$ must be the solution. Our main theorems require an extra assumption on the lower bound of either the scalar curvature or the $T$ curvature on the boundary, which is a second order or third order boundary condition for the conformal factor. Nevertheless, the conformal factor for the standard spherical metric in within the branch of {\bf classical minimal solutions}.  

\begin{prop} \label{P2}
Let $Q=\frac{(n-4)(n-2)n(n+2)}{16}$ and suppose $n\geq 5$. There are at most \textbf{two} positive solutions satisfying
\begin{align} \label{8}
\begin{cases}
\Delta^{2}u=Qu^{\frac{n+4}{n-4}} & \text{in}\quad B_{1}\\
u=1 & \text{on} \quad \partial B_{1}\\
\frac{\partial u}{\partial n}=-\frac{n-4}{2} & \text{on} \quad \partial B_{1}.\end{cases}
\end{align}
Moreover, $u_s=(\frac{2}{1+|x|^2})^{\frac{n-4}{2}}$ is the minimal solution. 
\end{prop}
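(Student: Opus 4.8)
The plan is to reduce the problem to an autonomous fourth–order ODE, analyse it by a conserved–energy and comparison argument in the spirit of Proposition~\ref{P4}, and read off the minimal solution from Boggio's maximum principle.

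We would first reduce to radial solutions. The boundary data $u\equiv1,\ \partial_\nu u\equiv-\tfrac{n-4}{2}$ on $\partial B_1$ are rotationally invariant and, on the ball, the biharmonic Green's function is positive \cite{Bo}, so the moving–plane method for polyharmonic Dirichlet problems (see \cite{GGS}) applies—after subtracting the radial function $\tfrac n4-\tfrac{n-4}{4}|x|^2$, which already realizes the boundary conditions—and yields that every positive solution of (\ref{8}) is radial. Setting $t=-\log r\in[0,\infty)$ and $\phi(t)=r^{\frac{n-4}{2}}u(r)$, the equation $\Delta^2u=Qu^{\frac{n+4}{n-4}}$ becomes the autonomous equation
\begin{align}\label{EFP2}
\Bigl(\partial_t^2-\bigl(\tfrac{n-4}{2}\bigr)^2\Bigr)\Bigl(\partial_t^2-\bigl(\tfrac n2\bigr)^2\Bigr)\phi=Q\,\phi^{\frac{n+4}{n-4}},\qquad t>0,\ \phi>0,
\end{align}
while $u(1)=1,\ u'(1)=-\tfrac{n-4}{2}$ become $\phi(0)=1,\ \phi'(0)=0$, and smoothness and positivity of $u$ at the origin give $\phi,\phi',\phi'',\phi'''\to0$ as $t\to\infty$. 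The hemisphere corresponds to $\phi_s(t)=(\cosh t)^{-\frac{n-4}{2}}$, for which $\phi_s''(0)=-\tfrac{n-4}{2}$.

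The heart of the argument is the analysis of (\ref{EFP2}). Being autonomous and variational it carries a conserved energy $E(\phi,\phi',\phi'',\phi''')$, and the decay at $t=\infty$ forces $E\equiv0$; equivalently, every admissible orbit lies on the two–dimensional stable manifold of the hyperbolic equilibrium $\phi\equiv0$ (for $n\ge5$ the linearized eigenvalues $\pm\tfrac{n-4}{2},\pm\tfrac n2$ are nonzero). Positive solutions of (\ref{8}) thus correspond to the intersection of this stable manifold with the affine plane $\{\phi=1,\ \phi'=0\}$: imposing $E=0$ constrains $(\phi''(0),\phi'''(0))$ to a curve, and the remaining stable–manifold condition cuts it to a finite set. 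To bound that set by two, put $\psi:=\phi''-\bigl(\tfrac{n-4}{2}\bigr)^2\phi$, so that $\psi''-\bigl(\tfrac n2\bigr)^2\psi=Q\,\phi^{\frac{n+4}{n-4}}\ge0$, and split according to the sign of $\psi(0)$. If $\psi(0)\le0$ (the case of $\phi_s$, since $\psi_s(0)=-\tfrac{n-4}{2}-\bigl(\tfrac{n-4}{2}\bigr)^2<0$), the maximum principle for $\partial_t^2-(\tfrac n2)^2$ forces $\psi\le0$ on all of $[0,\infty)$, hence $\phi''\le(\tfrac{n-4}{2})^2\phi$ and $u\ge1$; comparing $(\phi-\phi_s)''$ via the factored operator—tracking its first sign change and an interior extremum, exactly as in the proof of Proposition~\ref{P4}—then forces $\phi\equiv\phi_s$. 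If $\psi(0)>0$, one has $\phi''(0)>\phi_s''(0)$ with the lower–order data agreeing, and the same comparison shows that any such solution satisfies $\phi>\phi_s$, hence $u>u_s$, on $(0,1)$, and that it is unique. Together these give at most two solutions, with $u_s$—the one from the first alternative—pointwise the smaller, i.e. the minimal one.

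Minimality can also be obtained more softly: the substitution $u\mapsto u-\bigl(\tfrac n4-\tfrac{n-4}{4}|x|^2\bigr)$ turns (\ref{8}) into a Dirichlet problem $\Delta^2v=Q\,f(x,v)$ with $f$ nondecreasing in $v$, to which the sub/supersolution and monotone–iteration machinery behind Theorem~\ref{T3} (see \cite{W,GGS}, based on \cite{Bo}) applies and produces a minimal positive solution $\underline u\le u_s$; the ODE analysis then identifies $\underline u=u_s$. The main obstacle is the global count: local finiteness of the intersection is transversality, but bounding it by two is a statement about the shape of the shooting map $u(0)\mapsto(u(1),u'(1))$ along the one–parameter family of solutions of (\ref{EFP2}) regular at the origin—a fourth–order counterpart of the bifurcation–diagram analysis for the classical Gelfand problem on a ball. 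The critical Sobolev exponent $\tfrac{n+4}{n-4}$, which is exactly what makes the boundary relation rigid, is also what makes a priori control of this family (equivalently, compactness of the solution set of (\ref{8})) delicate; the operator factorization in (\ref{EFP2}) and the conserved energy are the tools that keep it tractable.
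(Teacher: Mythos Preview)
Your Emden--Fowler reduction and the appeal to a conserved energy are on the right track, but you stop just short of the step that actually does all the work, and then spend the rest of the proposal trying to recover it by a comparison argument that you yourself flag as incomplete. The conserved quantity for \eqref{EFP2} is
\[
E=\phi'''\phi'-\tfrac12(\phi'')^2-\tfrac12\Bigl(\bigl(\tfrac{n-4}{2}\bigr)^2+\bigl(\tfrac{n}{2}\bigr)^2\Bigr)(\phi')^2+\tfrac12\bigl(\tfrac{n-4}{2}\bigr)^2\bigl(\tfrac{n}{2}\bigr)^2\phi^2-\tfrac{n-4}{2n}Q\,\phi^{\frac{2n}{n-4}},
\]
and since $\phi'(0)=0$ the $\phi'''\phi'$ term drops out at $t=0$. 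Equating $E(0)=E(\infty)=0$ gives a \emph{quadratic} in $\phi''(0)$, namely $\phi''(0)^2=\bigl(\tfrac{n-4}{2}\bigr)^2$, so $\phi''(0)\in\{\pm\tfrac{n-4}{2}\}$. This is exactly the paper's route: in the $r$-variable it is the Pohozaev identity (\ref{poho}) (c.f.~\cite{GOR}), which yields $\Delta u(1)\in\{a_1,a_2\}=\{-\tfrac{(n-4)(n+2)}{4},-\tfrac{(n-4)(n-2)}{4}\}$. So the ``main obstacle'' you flag---bounding the count by two---is already resolved by the first integral you introduce; the sign-splitting on $\psi(0)$ and the shooting-map discussion are unnecessary.

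What remains, and what the paper actually proves, is uniqueness once the \emph{three} boundary values $u(1),u'(1),\Delta u(1)$ are fixed. Your stable-manifold picture suggests this (the intersection with an affine line in phase space) but does not prove it. The paper's argument is elementary and worth knowing: given two radial solutions $u_1,u_2$ with the same $(u,u',\Delta u)$ on $\partial B_1$, one first shows they are ordered by locating the outermost zero $r_*$ of $u_1-u_2$, taking the maximum at some $r_{**}\in(r_*,1)$, and applying the maximum principle for $\Delta^2$ and then $\Delta$ on the annulus $B_1\setminus B_{r_{**}}$, followed by Hopf's lemma at both boundary components (where $\partial_\nu(u_1-u_2)=0$). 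Once ordered, $\Delta(u_1-u_2)$ has a sign on $B_1$ with zero Dirichlet and Neumann data on $\partial B_1$, and Hopf again forces $u_1\equiv u_2$. Minimality of $u_s$ (which has $\Delta u_s(1)=a_1<a_2$) is then obtained by the monotone iteration you describe; the paper does the same.
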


\begin{proof}
By the work of Berchio-Gazzola-Weth~\cite{BGW}, the solution of (\ref{8}) is radial symmetric.
The Pohozaev identity for the radial solution of $\Delta^{2}u=Qu^{\frac{n+4}{n-4}}$ (c.f. ~\cite{GOR}) implies that at $r=1$
\begin{align*}
0=r^{n-1}(\Delta u)^{\prime}\left(ru^{\prime}+\frac{n-4}{2}u\right)+\frac{n}{2}r^{n-1}u^{\prime}\Delta u-r^{n}\left(\frac{1}{2}(\Delta u)^{2}+\frac{n-4}{2n}Qu^{\frac{2n}{n-4}}\right).
\end{align*}
Plugging the boundary conditions, we infer that
\begin{align} \label{poho}
\frac{n}{2}u^{\prime}(1)\Delta u(1)-\left(\frac{1}{2}(\Delta u(1))^{2}+\frac{n-4}{2n}Qu(1)^{\frac{2n}{n-4}}\right)=0,
\end{align}
from which we find two possible values for $\Delta u(1)$:
\[
a_1=-\frac{(n-4)(n+2)}{4}, a_2=-\frac{(n-4)(n-2)}{4}.
\]

Next we show the solution of 
\begin{align} \label{eq:three boundary condition}
\begin{cases}
\Delta^{2}u=Qu^{\frac{n+4}{n-4}} & \text{in}\quad B_{1}\\
u=c_1 & \text{on} \quad \partial B_{1}\\
\frac{\partial u}{\partial n}=c_2 & \text{on} \quad \partial B_{1}\\
\Delta u=c_3 &\text{on} \quad \partial B_{1}.
\end{cases}
\end{align}
is unique. 

Suppose there are two solutions $u_{1}$, $u_{2}$ of
(\ref{eq:three boundary condition}).
We \textbf{claim} that $u_{1}$
and $u_{2}$ are ordered. If not, first of all, as radial solutions, the zeros of $u_1-u_2$ do not accumulate, let $r_{*}$ be the largest zero of $u_1-u_2$ in $(0,1)$. Without loss of generality, we assume that $u_{1}(r)\ge u_{2}(r)$ on $[r_{*},1]$.
Setting $w=u_{1}-u_{2}$, then we have
\[ 
w(r_{*})=0,\quad w(r)> 0 \quad \quad \forall r\in (r_{*},1).
\]
Let $r_{**}\in(r_{*},1)$ be such that
\[
(u_{1}-u_{2})(r_{**})=\max_{[r_{*},1]}(u_{1}-u_{2}).
\]
It follows that $w'(r_{**})=0$ and $\Delta w(r_{**})\le0$.
On $B_{1}\backslash B_{r_{**}},$
\[
\Delta^{2}(u_{1}-u_{2})=Q(u_{1}^{\frac{n+4}{n-4}}-u_{2}^{\frac{n+4}{n-4}})\ge0.
\]
By the maximum principle,
\[
\Delta w\le\max_{\partial(B_{1}\backslash B_{r_{**}})}\Delta w\le0\quad on\quad B_{1}\backslash B_{r_{**}},
\]
and then
\[
w\ge\min_{\partial(B_{1}\backslash B_{r_{**}})}w=\min\{w|_{\partial B_{1}},w|_{\partial B_{r_{**}}}\}.
\]
But we know $\frac{\partial w}{\partial n}|_{\partial B_{r_{**}}}=0=\frac{\partial w}{\partial n}|_{\partial B_{1}},$
which contradicts to the Hopf Lemma. The claim thus is proved.

Without loss of generality, we assume $u_{1}\ge u_{2}$ in $B_{1}$. Thus
we have $\Delta^{2}u_{1}\ge\Delta^{2}u_{2}$ in $B_{1}.$ Since $\Delta u_{1}=\Delta u_{2}$
on $\partial B_{1}$, we have $\Delta(u_{1}-u_{2})\le0$ in $B_{1}$. But
$\frac{\partial(u_{1}-u_{2})}{\partial n}=0$ on $\partial B_{1}$
which contradicts to the Hopf Lemma again.

Finally, note $\Delta u_s(1)=a_1$. Let $u_2$ be another solution (if exists) corresponds to $\Delta u_2(1)=a_2$. We shall show that $u_2\geq u_s$. Write $u_{s}=u_{1}$ for simplicity, and we construct a sequence of functions $u_{k}$ satisfying
$$
\begin{cases}\Delta^{2} u_{k}=Q u_{k-1}^{\frac{n+4}{n-4}} & \text { in } B_{1} \\ u_{k}=1 & \text { on } \quad \partial B_{1} \\ \frac{\partial u_{k}}{\partial n}=-\frac{n-4}{2} & \text { on } \quad \partial B_{1},\end{cases}
$$
with $u_{3}=t u_{1}+(1-t) u_{2}$. Notice
$$
\Delta^{2} u_{4}=Q\left(t u_{1}+(1-t) u_{2}\right)^{\frac{n+4}{n-4}} \leq Q\left(t u_{1}^{\frac{n+4}{n-4}}+(1-t) u_{2}^{\frac{n+4}{n-4}}\right)=\Delta^{2} u_{3} .
$$
It follows from the comparison theorem (Theorem $5.6$ in ~\cite{GGS}) that
$$
u_{4} \leq u_{3} .
$$
Inductively, we have a monotone decreasing sequence of functions
$$
u_{3} \geq u_{4} \geq \cdots.
$$
Clearly we have a uniform lower barrier
$$
u_{k}(x) \geq \frac{n}{4}-\frac{n-4}{4}|x|^{2} .
$$
Using the representation of Green's function for bi-Laplace equation and monotone convergence theorem, there exists a limit
$$
\tilde{u}(x)=\lim _{k \rightarrow \infty} u_{k}(x),
$$
which solves (10). Therefore we have $t u_{1}+(1-t) u_{2} \geq u_{1}$ or $t u_{1}+(1-t) u_{2} \geq u_{2}$. It follows $u_{2} \geq u_{1}$ since $\Delta u_{1}(1)<\Delta u_{2}(1)$.
\end{proof}

Now for
\begin{align}  \tag{$P_{\lambda}$}
\begin{cases}
\Delta^{2}u=\lambda u^{\frac{n+4}{n-4}} & \text{in}\quad B_{1}\\
u=1 & \text{on} \quad \partial B_{1}\\
\frac{\partial u}{\partial n}=-\frac{n-4}{2} & \text{on} \quad \partial B_{1},\end{cases}
\end{align}
appealing to the Pohozaev identity (\ref{poho}) in the above proof, for $\lambda\in(0, \frac{n^3(n-4)}{16})$, there are at most two solutions for ($P_{\lambda}$); and for $\lambda=\frac{n^3(n-4)}{16}$, there is a unique value for $\Delta u(1)$, thus the solution to ($P_{\lambda^{*}}$) is unique. Clearly for $\lambda>\frac{n^3(n-4)}{16}$ there exists no solution. Hence it is tempting to make the following conjecture.
\begin{conjecture}
The extreme for ($P_{\lambda}$) is $\lambda^{*}=\frac{n^3(n-4)}{16}$.
\end{conjecture}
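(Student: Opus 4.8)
By the Pohozaev identity (\ref{poho}) we already have $\lambda^{*}\le\frac{n^{3}(n-4)}{16}$, so the whole content of the conjecture is the \emph{existence} statement: ($P_{\lambda}$) is solvable for every $\lambda<\frac{n^{3}(n-4)}{16}$, equivalently $\lambda^{*}\ge\frac{n^{3}(n-4)}{16}$. The plan is to follow the branch of minimal solutions $\{u_{\lambda}\}_{\lambda\in(0,\lambda^{*})}$ produced by Theorem~\ref{T3} and show that it can terminate only where the two roots of the Pohozaev quadratic coincide. Write $p=\frac{n+4}{n-4}$ and let
\[
d_{\pm}(\lambda)=\frac12\left(-\frac{n(n-4)}{2}\pm\sqrt{\frac{n^{2}(n-4)^{2}}{4}-\frac{4(n-4)}{n}\,\lambda}\right)
\]
be the two candidate values of $\Delta u(1)$ coming from (\ref{poho}). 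Each $u_{\lambda}$ is radial by \cite{BGW}, so $\Delta u_{\lambda}(1)\in\{d_{-}(\lambda),d_{+}(\lambda)\}$; since $\lambda\mapsto\Delta u_{\lambda}(1)$ is continuous, the set $\{d_{-}(\lambda),d_{+}(\lambda)\}$ has two distinct elements for $\lambda<\frac{n^{3}(n-4)}{16}$, and $u_{\lambda}\to h(x)=\frac n4-\frac{n-4}{4}|x|^{2}$ (the unique solution at $\lambda=0$, with $\Delta h(1)=-\frac{n(n-4)}{2}=d_{-}(0)$) as $\lambda\downarrow0$, the whole minimal branch satisfies $\Delta u_{\lambda}(1)=d_{-}(\lambda)$; this is consistent with Proposition~\ref{P2}, since $d_{-}(\lambda_{0})=a_{1}=\Delta u_{s}(1)$.

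Now assume, for contradiction, that $\lambda^{*}<\frac{n^{3}(n-4)}{16}$, so $d_{-}(\lambda^{*})<d_{+}(\lambda^{*})$. The first step is to prove that the extremal function $u_{\lambda^{*}}=\lim_{\lambda\uparrow\lambda^{*}}u_{\lambda}$ is a \emph{classical}, hence bounded and radial, solution of ($P_{\lambda^{*}}$); then $\Delta u_{\lambda^{*}}(1)=d_{-}(\lambda^{*})$. The minimal branch being semi-stable, the linearization $L_{\lambda^{*}}=\Delta^{2}-\lambda^{*}p\,u_{\lambda^{*}}^{\,p-1}$ (with the boundary conditions of ($P_{\lambda}$)) has nonnegative first eigenvalue. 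If that eigenvalue is positive, $L_{\lambda^{*}}$ is invertible and the implicit function theorem extends the branch to some $\lambda>\lambda^{*}$, contradicting the definition of $\lambda^{*}$. If it vanishes, a Crandall--Rabinowitz analysis at the turning point produces a second branch $\{U_{\lambda}\}$ for $\lambda$ slightly below $\lambda^{*}$ with $U_{\lambda}\neq u_{\lambda}$ and $U_{\lambda}\to u_{\lambda^{*}}$ in $C^{2}(\overline{B_{1}})$; each $U_{\lambda}$ is radial, so $\Delta U_{\lambda}(1)\in\{d_{-}(\lambda),d_{+}(\lambda)\}$, and by the uniqueness for the three-boundary-condition problem established in the proof of Proposition~\ref{P2} one has $\Delta U_{\lambda}(1)\ne\Delta u_{\lambda}(1)$, whence $\Delta U_{\lambda}(1)=d_{+}(\lambda)$. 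Letting $\lambda\uparrow\lambda^{*}$ gives $d_{+}(\lambda^{*})=\Delta u_{\lambda^{*}}(1)=d_{-}(\lambda^{*})$, forcing the discriminant to vanish, i.e. $\lambda^{*}=\frac{n^{3}(n-4)}{16}$: the desired contradiction.

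Both hard points stem from the fact that $p=\frac{n+4}{n-4}$ is the critical Sobolev exponent for $\Delta^{2}$. The first is the regularity of the extremal solution: minimal/stable solutions of critical fourth-order equations need not be a priori bounded, and in high dimensions one expects $u_{\lambda^{*}}$ to be genuinely singular — indeed $|x|^{-(n-4)/2}$ is itself a singular solution of $\Delta^{2}u=\frac{n^{2}(n-4)^{2}}{16}\,u^{p}$ in $B_{1}$ obeying the same two boundary conditions, so singular solutions already appear below $\lambda^{*}$. When $u_{\lambda^{*}}$ is singular the argument above must be replaced by a version of the Pohozaev/Rellich identity retaining the contribution of the singularity, and it is precisely there that the exact constant $\frac{n^{3}(n-4)}{16}$ should be forced; this is the step I expect to be the main obstacle. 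The second point is that the turning-point analysis requires a Palais--Smale-type compactness for the second branch, which can fail at the critical exponent; one anticipates recovering it by working in the radial class (where the critical exponent behaves subcritically) together with the two-valuedness of $\Delta u(1)$ from (\ref{poho}). A possible alternative, sidestepping the branch analysis, is to construct for each $\lambda<\frac{n^{3}(n-4)}{16}$ a radial supersolution $\overline u\ge h$ carrying the boundary data of ($P_{\lambda}$) — built from the scaling-invariant singular solution $|x|^{-(n-4)/2}$ and its Fowler-type relatives — and then run the monotone iteration of Proposition~\ref{P2} between $h$ and $\overline u$; there the difficulty is to keep the supersolution regular and ordered above $h$ all the way up to $\lambda=\frac{n^{3}(n-4)}{16}$.
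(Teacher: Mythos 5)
This statement is a conjecture in the paper: the authors do not prove it, they only establish the ``easy'' half, namely that the Pohozaev identity (\ref{poho}) (with $Q$ replaced by $\lambda$) forces $(\Delta u(1))^2+\frac{n(n-4)}{2}\Delta u(1)+\frac{n-4}{n}\lambda=0$, so no radial (hence, by \cite{BGW}, no) solution exists for $\lambda>\frac{n^3(n-4)}{16}$, and the value of $\Delta u(1)$ is unique at the threshold. Your proposal reproduces this part correctly, and your identification of the real content --- solvability of $(P_\lambda)$ for every $\lambda<\frac{n^3(n-4)}{16}$, i.e.\ the lower bound on $\lambda^{*}$ --- is exactly right, as is the bookkeeping that the minimal branch sits on the root $d_{-}(\lambda)$ (consistent with $\Delta u_s(1)=a_1$ and Proposition~\ref{P2}).

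However, what you submit is a programme rather than a proof, and the gaps you yourself flag are precisely the ones that make this a conjecture. First, the whole contradiction argument requires $u_{\lambda^{*}}$ to be a classical solution so that $\Delta u_{\lambda^{*}}(1)$ exists and equals $d_{-}(\lambda^{*})$; at the critical exponent $\frac{n+4}{n-4}$ there is no a priori boundedness of the extremal function, and your own observation that $|x|^{-(n-4)/2}$ is a singular solution with the same two boundary data at $\lambda=\frac{n^{2}(n-4)^{2}}{16}<\frac{n^{3}(n-4)}{16}$ shows that singular solutions cannot be excluded merely because $\lambda$ lies below the conjectured threshold; if $u_{\lambda^{*}}$ is singular, the Pohozaev identity acquires contributions from the singularity and the dichotomy $\Delta u_{\lambda^{*}}(1)\in\{d_{-},d_{+}\}$ breaks down. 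Second, the turning-point step is not available as stated: the Crandall--Rabinowitz continuation producing a second branch $U_{\lambda}\to u_{\lambda^{*}}$ in $C^{2}(\overline{B_1})$ presupposes both the regularity of $u_{\lambda^{*}}$ and a compactness (Palais--Smale type) property that is exactly what fails, or at least is unproven, at the critical exponent; the remark that radial functions behave ``subcritically'' is a hope, not an argument, since the relevant embeddings do not improve at the boundary-concentration scale relevant here. The alternative route via an explicit ordered supersolution above $h$ for every $\lambda$ up to $\frac{n^{3}(n-4)}{16}$ is likewise only sketched and no such supersolution is produced. So the proposal does not close the conjecture; it is a reasonable strategy whose two hard steps (regularity of the extremal solution and compactness/continuation at the turning point) remain open, which matches the paper's decision to state the result as a conjecture supported only by the nonexistence half.
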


\begin{rem}

If this conjecture were true, then there is a hemisphere rigidity theorem, under the assumption that: 1. $Q_g\geq \frac{n^3}{8}>Q_0=\frac{(n-2)n(n+2)}{8}$, 2. $g$ coincides with $g_0$ on the boundary and 3. the boundary is totally geodesic.  It will be interesting to find a geometric explanation for $Q$ curvature lower bound corresponding to the extremal $\lambda^{*}$. 
\end{rem}

Similarly, we have 
\begin{prop} For $n=4$, there are at most \textbf{two} solutions satisfying
\begin{align} \label{9}
\begin{cases}
\Delta^{2}u=6u^{4u} & \text{in}\quad B_{1}\\
u=0 & \text{on} \quad \partial B_{1}\\
\frac{\partial u}{\partial n}=-1 & \text{on} \quad \partial B_{1}.\end{cases}
\end{align}
Moreover, $u_s=\ln(\frac{2}{1+|x|^2})$ is the minimal solution. 
\end{prop}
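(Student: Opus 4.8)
The plan is to run the argument of Proposition~\ref{P2} in the conformally critical dimension $n=4$. First reduce to radial solutions: as in the power case, the symmetry result of Berchio--Gazzola--Weth~\cite{BGW} shows that any positive solution of (\ref{9}) on $B_1$ with these boundary data is radially symmetric (and radially decreasing); I would check that their moving-plane-type argument for the ball goes through verbatim with the nonlinearity $f(u)=6e^{4u}$ in place of a power. Having reduced to the radial ODE $\Delta^{2}u=6e^{4u}$ on $B_1\subset\mathbb{R}^4$, I would next invoke the Pohozaev identity for this equation (as in~\cite{GOR}). The essential difference from $n\geq 5$ is that the scaling invariance $u\mapsto u(\lambda\,\cdot)+\ln\lambda$ of the equation dictates the multiplier $r u'+1$ rather than $ru'+\tfrac{n-4}{2}u$; with this choice all bulk integrals cancel (the $\tfrac32|\partial B_1|$-type boundary term from $6e^{4u}$ and the interior $\int e^{4u}$ terms combine so nothing is left in $B_1$), leaving a boundary relation. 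Plugging in $u(1)=0$ and $u'(1)=-1$ turns this relation into a quadratic equation for $\Delta u(1)$, so $\Delta u(1)$ can take at most two values $a_1,a_2$, one of which is $\Delta u_s(1)$.

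Second, I would show that adjoining the extra boundary condition $\Delta u=c_3$ on $\partial B_1$ makes the problem
\[
\Delta^{2}u=6e^{4u}\ \text{in }B_1,\qquad u=0,\ \ \partial_\nu u=-1,\ \ \Delta u=c_3\ \text{on }\partial B_1
\]
uniquely solvable, by the same two-step Hopf-lemma argument as in Proposition~\ref{P2}. If two radial solutions $u_1,u_2$ differed, their difference $w=u_1-u_2$ would have an outermost zero $r_*\in(0,1)$; on the annulus $B_1\setminus B_{r_{**}}$ (with $r_{**}$ the location of the interior maximum of $w$) one has $\Delta^2 w=6(e^{4u_1}-e^{4u_2})\geq 0$, so the maximum principle for $\Delta$ forces $\Delta w\leq 0$ and then $w\geq \min\{w|_{\partial B_1},w|_{\partial B_{r_{**}}}\}$, contradicting $\partial_\nu w=0$ on both components via Hopf. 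Once $u_1\geq u_2$ globally with equal $\Delta u$ on $\partial B_1$, then $\Delta w\leq 0$ in $B_1$ with $\partial_\nu w=0$ on $\partial B_1$ again contradicts Hopf unless $w\equiv 0$. Combined with the two possible values of $\Delta u(1)$, this yields at most two solutions of (\ref{9}).

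Third, for minimality of $u_s$: given a hypothetical second solution $u_2$ (necessarily with $\Delta u_2(1)\neq \Delta u_s(1)$), exploit the convexity of $f(u)=e^{4u}$ exactly as in the proof of Proposition~\ref{P2}. Starting from $u_3=t u_s+(1-t)u_2$ and iterating
\[
\Delta^{2}u_{k}=6e^{4u_{k-1}}\ \text{in }B_1,\qquad u_{k}=0,\ \ \partial_\nu u_k=-1\ \text{on }\partial B_1,
\]
Jensen's inequality gives $\Delta^2 u_4\leq \Delta^2 u_3$, the comparison theorem (Theorem~$5.6$ of~\cite{GGS}) gives a monotone decreasing sequence $u_3\geq u_4\geq\cdots$ bounded below by an explicit quadratic barrier, and the Green's-function representation plus monotone convergence produces a limit solving (\ref{9}). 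Hence $t u_s+(1-t)u_2\geq u_s$ or $\geq u_2$, and since the two solutions are ordered (by the sign of $\Delta u_s(1)-\Delta u_2(1)$), this forces $u_2\geq u_s$.

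The step I expect to be the main obstacle is the Pohozaev computation: getting the conformally correct multiplier $ru'+1$ and carefully tracking the biharmonic boundary terms so that they collapse, with the boundary data substituted, into a genuine quadratic in $\Delta u(1)$ with exactly two roots. A secondary point to verify is that the Berchio--Gazzola--Weth symmetry argument applies to the exponential nonlinearity rather than only to $u^{(n+4)/(n-4)}$; I expect it does, since the method only uses monotonicity and the structure of the ball.
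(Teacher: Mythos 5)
Your proposal is correct and follows the same skeleton as the paper's proof: radial symmetry via Berchio--Gazzola--Weth, a scaling-based identity that pins $\Delta u(1)$ down to two possible values, and then the Hopf-lemma uniqueness and convexity iteration exactly as in Proposition~\ref{P2}. The only divergence is the middle step. The paper passes to cylindrical coordinates, $v(t)=u(e^{-t})-t$, so that \eqref{10} admits the first integral \eqref{11}, and compares its values at $t=0$ and $t\to\infty$ (the limits $v'\to-1$, $v'',v'''\to 0$ coming from smoothness of $u$ at the origin) to conclude $v''(0)=\pm1$; you instead run the Rellich--Pohozaev identity on $B_1$ with the conformal multiplier $ru'+1$. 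These are the same computation in different coordinates, and the obstacle you flag does resolve cleanly: since $u'(1)=-1$, your multiplier vanishes on $\partial B_1$, so the undetermined third-order boundary term $\partial_\nu\Delta u$ cancels (this mirrors how $v'(0)=0$ kills the $v'''v'$ term in \eqref{11}), while for $n=4$ the interior $(\Delta u)^2$ contribution carries the factor $\frac{4-n}{2}=0$ and the $e^{4u}$ bulk terms cancel against the boundary term; plugging $u(1)=0$, $u'(1)=-1$ gives the quadratic $\Delta u(1)^2+4\Delta u(1)+3=0$, i.e.\ $\Delta u(1)\in\{-1,-3\}$, which matches the paper's $v''(0)=\Delta u(1)+2=\pm1$ (and $\Delta u_s(1)=-3$). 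Your caution about applying the symmetry result to the exponential nonlinearity and to the nonhomogeneous boundary data is reasonable, but the paper makes exactly the same appeal, so nothing in your route is weaker than the original; note also that the nonlinearity in \eqref{9} should read $6e^{4u}$, as you implicitly assume.
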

\begin{proof}
By the work of Berchio-Gazzola-Weth~\cite{BGW}, the solution of (\ref{9}) is radial symmetric.
Using the cylindrical coordinates, which amounts to the substitution $v(t):=u(e^{-t})-t$, then $v:[0,\infty) \to \mathbb{R}$ satisfies:
\begin{align} \label{10}
v''''(t)-4v''(t)=6e^{4v},
\end{align}
and $v(0)=v'(0)=0$.
Multiplying (\ref{10}) to $v'(t)$, we get 
\begin{align} \label{11}
\left( v'''v'-\frac{1}{2}(v'')^2-2(v')^2-\frac{3}{2}e^{4v}\right)'=0.
\end{align}
Noting that $v'(\infty)=-1$, $v''(\infty)=v'''(\infty)=0$, we infer that $v''(0)=\pm 1$. 
A similar maximum principle as in the proof of Proposition~\ref{P2} yields desired conclusions. 
\end{proof}

Appealing to the first integral identity (\ref{11}), we have 
\begin{conjecture}
For the problem 
\begin{align} \tag{$Q_{\lambda}$}
\begin{cases}
\Delta^{2}u=\lambda e^{4u} & \text{in}\quad B_{1}\\
u=1 & \text{on} \quad \partial B_{1}\\
\frac{\partial u}{\partial n}=-1 & \text{on} \quad \partial B_{1},\end{cases}
\end{align}
the extremal $\lambda^{*}=8$.
\end{conjecture}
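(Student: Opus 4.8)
The plan is to prove the two bounds $\lambda^{*}\le 8$ and $\lambda^{*}\ge 8$ separately, the first being essentially a consequence of the first integral (\ref{11}) and the second being the genuinely delicate existence statement. For the upper bound I would argue exactly as in the proof of the preceding proposition: by Berchio-Gazzola-Weth~\cite{BGW} every solution of ($Q_{\lambda}$) is radial, and the cylindrical substitution $v(t):=u(e^{-t})-t$ turns it into
\[
v''''(t)-4v''(t)=\lambda e^{4v},\qquad v(0)=v'(0)=0,
\]
with $v'(t)\to -1$ and $v''(t),v'''(t)\to 0$ as $t\to\infty$ (smoothness at the centre of $B_{1}$). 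Multiplying by $v'$ and integrating gives the $\lambda$-dependent version of (\ref{11}),
\[
\Bigl(v'''v'-\tfrac12(v'')^{2}-2(v')^{2}-\tfrac{\lambda}{4}e^{4v}\Bigr)'\equiv 0,
\]
and comparing the conserved quantity at $t=0$ and $t=\infty$ yields $-\tfrac12 v''(0)^{2}-\tfrac{\lambda}{4}=-2$, i.e.
\[
v''(0)^{2}=4-\frac{\lambda}{2}.
\]
Hence for $\lambda>8$ this is incompatible with $v''(0)\in\mathbb{R}$, so ($Q_{\lambda}$) has no solution; together with Theorem~\ref{T3} (applicable after the standard reduction of the inhomogeneous boundary data to a modified nonlinearity satisfying ($\star$)) this forces $\lambda^{*}\le 8$. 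The identity also locates everything: at $\lambda=6$ one gets $v''(0)=\pm1$, the value $-1$ being the hemisphere $v=-\ln\cosh t$ and the minimal branch, while at $\lambda=8$ one is forced to $v''(0)=0$, consistent with uniqueness of $u_{\lambda^{*}}$.

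The content of the conjecture is the opposite inequality $\lambda^{*}\ge 8$, i.e. the existence of a classical solution for every $\lambda\in(0,8)$. The natural approach is a shooting argument in the cylindrical ODE: fix $\lambda\in(0,8)$, impose $v(0)=v'(0)=0$ and $v''(0)=-\sqrt{4-\lambda/2}$ (the root that continues the minimal/hemisphere branch), and shoot in the remaining parameter $\beta=v'''(0)$. One would show that the set of $\beta$ for which the trajectory is global and satisfies $v'(t)\to -1$ is nonempty: for suitable $\beta$ (near the hemisphere value $\beta=0$ at $\lambda=6$) the derivative inequalities propagate as in Proposition~\ref{P4}, giving a monotone profile with the correct asymptotics, whereas for $\beta$ too large $e^{4v}$ should drive finite-time blow-up; the first integral keeps $v''$ bounded along the flow, so an intermediate-value argument in $\beta$ would produce the solution. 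Equivalently, in PDE terms one would try to show that the minimal branch $u_{\lambda}$ admits an a priori bound uniform on $[0,\Lambda]$ for every $\Lambda<8$ --- since $v_{\lambda}''(0)^{2}=4-\lambda/2$ is pinned along the whole branch, the branch cannot run off to infinity before $\lambda$ reaches $8$ provided the full Cauchy data stays controlled, which is a coercivity/nondegeneracy statement for the linearized operator $\phi\mapsto\phi''''-4\phi''-4\lambda e^{4v_{\lambda}}\phi$ on the half-line.

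The main obstacle --- and the reason the statement is only a conjecture --- is precisely this last step: ruling out that the minimal solution $u_{\lambda}$ loses regularity (or that the shooting trajectory ceases to be global) at some $\lambda_{0}\in(6,8)$, strictly before the threshold predicted by the first integral. A concrete way to close the gap would be to construct an explicit one-parameter family of radial supersolutions of ($Q_{\lambda}$) interpolating from the hemisphere profile up to $\lambda\uparrow 8$, trapping the classical branch below a uniform barrier; alternatively a Hardy-Rellich type inequality showing that the quadratic form of the linearized operator above stays nonnegative for all $\lambda<8$ would do the job. In either case the first integral is the organizing identity: it pins the sharp non-existence threshold $\lambda=8$ and prescribes the target Cauchy data $v''(0)=\pm\sqrt{4-\lambda/2}$ for any existence scheme.
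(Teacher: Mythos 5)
This statement is stated as a \emph{conjecture} in the paper: the authors offer no proof, only the remark that the first integral identity (\ref{11}) motivates the value $8$. Your first half makes that motivation precise and is essentially correct: multiplying the cylindrical ODE by $v'$ gives the $\lambda$-dependent conserved quantity, and evaluating it at $t=0$ and $t=\infty$ (using smoothness at the centre of $B_1$, so $v'\to-1$, $v'',v'''\to 0$, $e^{4v}\to0$) forces $v''(0)^2=4-\lambda/2$ for any smooth radial solution, hence nonexistence of radial solutions for $\lambda>8$ and $\lambda^*\le 8$ once one knows the minimal branch is radial (which follows from uniqueness of the minimal solution and rotational invariance, or from \cite{BGW} as the paper uses it). Two small cautions: the computation uses $v(0)=0$, i.e.\ it reads the boundary condition of ($Q_{\lambda}$) as $u=0$ on $\partial B_1$, matching (\ref{9}); with the literal ``$u=1$'' the same identity would give the threshold $8e^{-4}$, so you are (reasonably) correcting what is evidently a typo, but you should say so. Also, Theorem~\ref{T3} is needed only to make sense of $\lambda^*$ after the reduction to homogeneous data, not for the nonexistence itself.

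The genuine gap is the half that makes the statement a conjecture, namely $\lambda^*\ge 8$, and your proposal does not close it. The shooting scheme, the uniform a priori bound on the minimal branch, the supersolution barrier, and the Hardy--Rellich/coercivity statement for $\phi\mapsto\phi''''-4\phi''-4\lambda e^{4v_\lambda}\phi$ are all listed as possible strategies, but none is carried out: the first integral pins only $v''(0)$ along the branch and says nothing about $v'''(0)$ (equivalently, about the boundary data $\partial_\nu\Delta u$), so it cannot by itself prevent the minimal branch from becoming singular, or the shooting trajectory from blowing up or failing to achieve $v'(t)\to-1$, at some $\lambda_0\in(6,8)$. The intermediate-value argument in $\beta=v'''(0)$ also needs a connectedness/continuity analysis of the sets of blowing-up versus decaying trajectories that is not provided. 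In short, your write-up correctly reproduces and slightly sharpens the paper's evidence (the sharp nonexistence threshold and the degenerate Cauchy data $v''(0)=0$ at $\lambda=8$), but the existence of solutions for all $\lambda<8$ remains unproved, so the conjecture is not established by this proposal --- as you yourself acknowledge.
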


We finally mention a stability characterization of the extremal $\lambda^{*}$, (c.f. Proposition 36 and 37 in ~\cite{AGGM}), which extends the result of the second order ~\cite{CR}. The stability is in terms of the nonnegativity of the linearized operator, and it is at the extremal $\lambda^{*}$ for which the first eigenvalue of linearized operator is zero. This is responsible for solutions to bifurcate at $\lambda^{*}$.

\begin{prop} Suppose $u_{\lambda}$ is a regular solution for $(P_{\lambda})$, $\lambda\in (0, \lambda^{*}]$,  let $\mu_1$ be the first eigenvalue of the linearized operator $\mathcal{L}_{\lambda}(\varphi):=\left( \Delta^2- \frac{n+4}{n-4} \lambda u_{\lambda}^{\frac{8}{n-4}}\right) (\varphi) $ subject to the Dirichlet boundary condition $\varphi=\partial_{\nu} \varphi=0$, then $\mu_1=0$ if and only if $\lambda=\lambda^{*}$. The same holds true for $(Q_{\lambda})$, where $\mathcal{L}_{\lambda}(\varphi):=\left( \Delta^2 -4\lambda e^{4u_{\lambda}}\right) (\varphi)$.
\end{prop}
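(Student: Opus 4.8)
The plan is to reprise the variational / implicit-function-theorem characterization of the extremal parameter familiar from the second-order case (\cite{CR}; see also \cite{AGGM}), the one genuinely new point being that fourth order has no maximum principle, so every positivity statement on $B_1$ must be squeezed out of Boggio's principle. After subtracting the biharmonic carrier of the prescribed Cauchy data — $h(x)=\frac{n}{4}-\frac{n-4}{4}|x|^2$ for $(P_\lambda)$, which is precisely the lower barrier of Section~\ref{S2}, and $h(x)=\frac{3}{2}-\frac{1}{2}|x|^2$ for $(Q_\lambda)$ — both problems become Dirichlet problems $\Delta^2\bar u=\lambda f(x,\bar u)$, $\bar u=\partial_\nu\bar u=0$ on $\partial B_1$, with $f$ positive, nondecreasing and convex in $\bar u$, whose linearization at the minimal solution is exactly the operator $\mathcal{L}_{\lambda}$ in the statement; I suppress the shift below and write $f(u_\lambda)$, $f'(u_\lambda)$. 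By Boggio's principle (\cite{Bo,GGS}) the Dirichlet inverse $(\Delta^2)^{-1}$ is positivity improving, so whenever $\mu_1(\lambda)>0$ the operator $\mathcal{L}_{\lambda}=\Delta^2-V_\lambda$ (with $V_\lambda=\frac{n+4}{n-4}\lambda u_\lambda^{8/(n-4)}$, resp.\ $4\lambda e^{4u_\lambda}$, a bounded nonnegative potential) is invertible, its inverse $\mathcal{L}_{\lambda}^{-1}=(\Delta^2)^{-1}\sum_{k\ge0}(V_\lambda(\Delta^2)^{-1})^k$ converges in operator norm — equivalently to $\mu_1(\lambda)>0$ — and is again positivity improving; Krein–Rutman then makes $\mu_1(\lambda)$ simple with a strictly positive eigenfunction $\varphi_\lambda$.

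The core step is semistability of the minimal branch: $\mu_1(\lambda)>0$ for all $\lambda\in(0,\lambda^*)$, which gives the ``only if'' direction by contraposition. I would argue by continuation. Near $\lambda=0$ the minimal solution is $C^4$-close to $h$, so $\mathcal{L}_{\lambda}$ is near $\Delta^2$ and $\mu_1(\lambda)>0$; let $\bar\lambda$ be the supremum of the parameters up to which $\mu_1>0$ along the minimal branch, and suppose $\bar\lambda<\lambda^*$. On $(0,\bar\lambda)$ each $\mathcal{L}_{\lambda}$ is invertible, the branch is $C^1$ with $\partial_\lambda u_\lambda=\mathcal{L}_{\lambda}^{-1}f(u_\lambda)>0$, and (since $u_\lambda$ stays bounded for $\lambda<\lambda^*$) elliptic estimates give $u_\lambda\to u_{\bar\lambda}$ in $C^4$ and $\mu_1(\bar\lambda)=\lim_{\lambda\uparrow\bar\lambda}\mu_1(\lambda)\ge0$, with, by compactness of the normalized $\varphi_\lambda$, a nonnegative nonzero $\varphi_*$ in the kernel of $\mathcal{L}_{\bar\lambda}$ in case $\mu_1(\bar\lambda)=0$. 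If $\mu_1(\bar\lambda)>0$, the implicit function theorem — minimality and monotonicity of $\lambda\mapsto u_\lambda$ forcing the continued branch to be the minimal one — extends $\{\mu_1>0\}$ past $\bar\lambda$, a contradiction. If $\mu_1(\bar\lambda)=0$, pick $\mu\in(\bar\lambda,\lambda^*)$ and set $w:=u_\mu-u_{\bar\lambda}\ge0$, $w\not\equiv0$, $w=\partial_\nu w=0$ on $\partial B_1$; by convexity of $f$ and $\mu>\bar\lambda$,
\[
\mathcal{L}_{\bar\lambda}w=(\mu-\bar\lambda)f(u_\mu)+\bar\lambda\bigl(f(u_\mu)-f(u_{\bar\lambda})-f'(u_{\bar\lambda})w\bigr)>0\quad\text{in }B_1,
\]
whence $0=\langle w,\mathcal{L}_{\bar\lambda}\varphi_*\rangle=\langle\mathcal{L}_{\bar\lambda}w,\varphi_*\rangle>0$, a contradiction. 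Hence $\bar\lambda=\lambda^*$.

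To finish, at $\lambda=\lambda^*$ the assumed regularity of $u_{\lambda^*}$ lets the same compactness argument give $\mu_1(\lambda^*)=\lim_{\lambda\uparrow\lambda^*}\mu_1(\lambda)\ge0$, while $\mu_1(\lambda^*)\le0$ because otherwise $\mathcal{L}_{\lambda^*}$ would be invertible and the implicit function theorem would yield solutions of $(P_\lambda)$ for $\lambda$ slightly beyond $\lambda^*$, contradicting Theorem~\ref{T3}; so $\mu_1(\lambda^*)=0$. The argument for $(Q_\lambda)$ is identical with $f(u)=e^{4u}$, using only positivity, monotonicity and convexity of the nonlinearity together with Boggio's principle on $B_1$.

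The main obstacle, and the reason this is not a verbatim transcription of \cite{CR}, is the absence of a fourth-order maximum principle: simplicity and positivity of the principal eigenfunction of $\mathcal{L}_{\lambda}$, hence the semistability of the minimal branch, must be recovered from Boggio's positivity of the biharmonic Green's function, which is available only on the ball and only where the Neumann series for $\mathcal{L}_{\lambda}^{-1}$ converges — exactly the set $\{\mu_1>0\}$ one is trying to pin down — and this circularity is what forces the continuation/squeezing bookkeeping above. A smaller but genuine point is that passing $\mu_1$ to the limit at $\lambda^*$ really does use that $u_{\lambda^*}$ is regular, so that the potential (a power of $u_{\lambda^*}$, resp.\ $e^{4u_{\lambda^*}}$) is bounded and the spectral data converge.
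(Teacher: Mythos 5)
The paper does not actually prove this proposition: it is stated as a known fact, with the argument delegated to Propositions 36 and 37 of \cite{AGGM} (the fourth-order extension of \cite{CR}), and your proposal is in substance a faithful reconstruction of exactly that standard continuation argument — implicit function theorem along the minimal branch, convexity of the nonlinearity tested against a nonnegative kernel element, and nonexistence beyond $\lambda^{*}$ from Theorem~\ref{T3} — applied, as intended by the statement, to the minimal branch $u_{\lambda}$. The one point worth flagging is minor: the circularity you worry about (positivity/simplicity of the principal eigenfunction being tied to convergence of the Neumann series, i.e.\ to $\mu_1>0$) is avoided in the cited literature by passing to the weighted eigenvalue problem $\Delta^2\varphi=\Lambda\, V_{\lambda}\varphi$, to which Krein--Rutman applies for every $\lambda$ via Boggio's positive Green function \cite{Bo,GGS}, stability being read off as $\Lambda_1>1$; your workaround (extracting a nonnegative, nontrivial kernel element as a limit of positive eigenfunctions at the threshold) is adequate, as is the tacitly needed sandwich-plus-elliptic-estimates identification of the IFT-continued branch with the minimal one. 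So the proposal is essentially correct and follows the same route as the sources the paper cites.
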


\vspace{0.2in}

\end{document}